 \def\be{\begin{eqnarray}}
\def\ee{\end{eqnarray}}
\def\b*{\begin{eqnarray*}}
\def\e*{\end{eqnarray*}}
\newcommand{\R}{\mathbb{R}}
\newcommand{\N}{\mathbb{N}}
\newcommand{\PP}{\mathbb{P}}
\newcommand{\dd}{\mathrm{d}}
\newcommand{\cadlag}{c\`adl\`ag }
\newcommand{\AY}{Az\'{e}ma-Yor }
\newcommand{\iAY}{iterated Az\'{e}ma-Yor }
\newcommand\restr[2]{{% we make the whole thing an ordinary symbol
  \left.\kern-\nulldelimiterspace % automatically resize the bar with \right
  #1 % the function
  \vphantom{\big|} % pretend it's a little taller at normal size
  \right|_{#2} % this is the delimiter
  }}
\newcommand{\indicator}[1]{\mathbbm{1}_{\left\{ {#1} \right\} }}
\newcommand{\indic}[1]{\mathbbm{1}_{ {#1} }}
\newcommand{\E}[1]{\mathbb{E}\left[ {#1} \right] }
\newcommand{\Prob}[1]{\PP \left[ {#1} \right] }
\newcommand{\bzeta}{\boldsymbol{\zeta}}
\newcommand{\tbzeta}{\boldsymbol{\tilde{\zeta}}}
\newcommand{\tbxi}{\boldsymbol{\tilde{\xi}}}
\newcommand{\bxi}{\boldsymbol{\xi}}
\theoremstyle{plain}
\newtheorem{Theorem}{Theorem}[section]
\newtheorem{Proposition}[Theorem]{Proposition}%[section]
\newtheorem{Corollary}[Theorem]{Corollary}%[section]
\newtheorem{Definition}[Theorem]{Definition}%[section]
\theoremstyle{remark}
\newtheorem{Remark}[Theorem]{Remark}%[section]
\numberwithin{equation}{section}
\numberwithin{figure}{section}
\title{
%\textbf{Work in Progress \\
%Please do Not Circulate}
%\\ 
Martingale Inequalities for the Maximum via Pathwise Arguments
\thanks{Jan Ob\l\'oj thankfully acknowledges support from the ERC Starting Grant {\sc RobustFinMath} 335421, the Oxford-Man Institute of Quantitative Finance and St John's College in Oxford. Peter Spoida gratefully acknowledges scholarships from the Oxford-Man Institute of Quantitative Finance and the DAAD. Nizar Touzi gratefully acknowledges financial support from the ERC Advanced Grant 321111 ROFIRM, the Chair {\it Financial Risks} of the {\it Risk Foundation} sponsored by Soci\'et\'e G\'en\'erale, and the Chair {\it Finance and Sustainable Development} sponsored by EDF and CA-CIB.}
}
\author{Jan Ob\l{}\'{o}j\thanks{University of Oxford, Mathematical Institute, the Oxford-Man Institute of Quantitative Finance and St John's College, Jan.Obloj@maths.ox.ac.uk} 
        \and Peter Spoida\thanks{University of Oxford, Mathematical Institute and the Oxford-Man Institute of Quantitative Finance, Peter.Spoida@maths.ox.ac.uk} 
        \and Nizar Touzi\thanks{Ecole Polytechnique Paris, Centre de Math\'ematiques Appliqu\'ees, nizar.touzi@polytechnique.edu}}
\date{\today}
\begin{document}

\maketitle

\begin{abstract}

We study a class of martingale inequalities involving the running maximum process. 
They are derived from pathwise inequalities introduced by \citet{Touzi_maxmax} and provide an upper bound on the expectation of a function of the running maximum in terms of marginal distributions at $n$ intermediate time points. The class of inequalities is rich and we show that in general no inequality is \emph{uniformly sharp} -- for any two inequalities we specify martingales such that one or the other inequality is sharper.
We then use our inequalities to recover Doob's $L^p$ inequalities. For $p\leq 1$ we obtain new, or refined, inequalities. 

\end{abstract}
 
\begin{comment}
\paragraph*{Acknowledgement}
Jan Ob\l\'oj is thankful to the Oxford-Man Institute of Quantitative Finance and St John's College in Oxford for their support.

Peter Spoida gratefully acknowledges scholarships from the Oxford-Man Institute of Quantitative Finance and the DAAD.

Nizar Touzi would like to thank for the financial support from the ERC grant 321111 ROFIRM, the Chair {\it Financial Risks} of the {\it Risk Foundation} sponsored by Soci\'et\'e G\'en\'erale, and the Chair {\it Finance and Sustainable Development} sponsored by EDF and CA-CIB.
\end{comment}

\section{Introduction}

In this article we study certain martingale inequalities for the terminal maximum of a stochastic process. We thus contribute to a research area with a long and rich history. In seminal contributions, Blackwell and Dubins \cite{BlackwellDubins:63}, Dubins and Gilat \cite{DubinsGilat:78} and Az\'ema and Yor \cite{AzemaYor:79,AzemaYor:79b} showed that the distribution of the maximum $\bar{X}_T:=\sup_{t\leq T} X_t$ of a martingale $(X_t)$ is bounded above, in stochastic order, by the so called Hardy-Littlewood transform of the distribution of $X_T$, and the bound is attained. This led to series of studies on the possible distributions of $(X_T,\bar{X}_T)$, see Carraro, El Karoui and Ob\l\'oj \cite{CarraroElKarouiObloj:09} for a discussion and further references. More recently, such problems appeared very naturally within the field of mathematical finance. The original result was extended to the case of a non trivial starting law in Hobson \cite{Hobson:98b} and to the case of a fixed intermediate law in Brown, Hobson and Rogers \cite{Brown98themaximum}.

The novelty of our study here, as compared with the works mentioned above, is that we look at inequalities which use the information about the process at $n$ intermediate time points. One of our goals is to understand how the bound induced by these more elaborate inequalities compares to simpler inequalities which do not use information about the process at intermediate time points.
We show that in our context these bounds can be both, better or worse.
We also note that knowledge of intermediate moments does not induce a necessarily tighter bound in Doob's $L^p$-inequalities. 

Throughout, we emphasise the simplicity of our arguments, which are all elementary.
This is illustrated in Section \ref{sec:doob_section} where we obtain amongst others the sharp versions of Doob's $L^p$-inequalities for all $p>0$. While the case $p\geq 1$ is already known in the literature, our Doob's $L^p$-inequality in the case $p\in(0,1)$ appears new.

The idea of deriving martingale inequalities from pathwise inequalities is already present in work on robust pricing and hedging by \citet{Hobson:98b}.
Other authors have used pathwise arguments to derive martingale inequalities, e.g.
Doob's inequalities are considered by \citet{TrajectorialDoob} and \citet{OblojYor:06}.
The Burkholder-Davis-Gundy inequality is rediscovered with pathwise arguments by \citet{Beiglboeck:arXiv1305.6188}. 
In this context we also refer to \citet{CoxWang:11} and \citet{CoxPeskir:12} whose pathwise inequalities relate a process and time.
In a similar spirit, bounds for local time are obtained by \citet{MR2462552}.
\citet{Beiglbock:arXiv1401.4698} look at general martingale inequalities and explain how they can be obtained from deterministic inequalities. This approach builds on the so-called Burkholder's method, a classical tool in probability used to construct sharp martingale inequalities, see Os\c{e}kowski \cite[Chp.~2]{Osekowski:2012wh} for a detailed discussion.

In a discrete time and quasi-sure setup, the results of \citet{NB13:5} can be seen as general theoretical underpinning of many ideas we present here in the special case of martingale inequalities involving the running maximum.

\paragraph*{Organization of the article} 
In the Section \ref{sec:main_result} we state and prove our main result.
In Section \ref{sec:doob_section} we specialise our inequalities and demonstrate how they can be used to derive, amongst others, Doob's inequalities. We also investigate in which sense our martingale inequalities can provide sharper versions of Doob's inequalities.

\subsection{Preliminaries}

We assume that a filtered probability space $(\Omega,\mathcal{F}, (\mathcal{F}_t),\PP)$ is fixed which supports a standard real-valued Brownian motion $B$ with some initial value $X_0\in \R$. 
We will typically use $X=(X_t)$ to denote a (sub/super) martingale and, unless otherwise specified, we always mean this with respect to $X$'s natural filtration. Throughout, we fix arbitrary times $0=t_0\leq t_1\leq t_2\leq \ldots\leq t_n=:T$.

Before we proceed to the main result, we recall a remarkable pathwise inequality from \citet{Touzi_maxmax}. The version we give below appears in the proof of Proposition 3.1 in \cite{Touzi_maxmax} and is best suited to our present context. \newpage
\begin{Proposition}[Proposition 3.1 of \citet{Touzi_maxmax}]
\label{lem:Trajectorial_Inequality_Ordered_Case}
Let $\omega$ be a c\`{a}dl\`{a}g path and denote $\bar{\omega}_t:=\sup_{0 \leq s \leq t}{\omega_s}$. Then, for $m \geq \omega_0$ and $\zeta_1 \leq \dots \leq \zeta_n < m $:
\begin{alignat}{3}
 \indicator{\bar{\omega}_{t_n} \geq m} 
 \leq  
  \Upsilon_n(\omega, m, \bzeta ) := & &&\sum_{i=1}^{n}
 \left( 
 \frac{(\omega_{t_i}-\zeta_i)^+}
      {m-\zeta_i}
 +\indicator{\bar{\omega}_{t_{i-1}}<m \leq \bar{\omega}_{t_{i}}} \frac{m-\omega_{t_i}}
      {m-\zeta_i}
 \right)                    
 \label{eq:thm_Trajectorial_Inequality_Ordered_Case_1}\\
 &-&&\sum_{i=1}^{n-1}
 \left( 
 \frac{(\omega_{t_i}-\zeta_{i+1})^+}
      {m-\zeta_{i+1}} 
 +\indicator{m \leq \bar{\omega}_{t_{i}},
             \zeta_{i+1}\le\omega_{t_i}}  
  \frac{\omega_{t_{i+1}}-\omega_{t_i}}
       {m-\zeta_{i+1}} 
 \right).
\nonumber
\end{alignat}
\end{Proposition}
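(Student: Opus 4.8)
The plan is to prove \eqref{eq:thm_Trajectorial_Inequality_Ordered_Case_1} by induction on $n$, but only after strengthening the statement so that the inductive step closes: the naive hypothesis $\Upsilon_{n-1}\ge\indicator{\bar\omega_{t_{n-1}}\ge m}$ is too weak. Instead I would prove, for all $n\ge1$,
\[
\Upsilon_n(\omega,m,\bzeta)\ \ge\ \indicator{\bar\omega_{t_n}\ge m}\ +\ \indicator{\bar\omega_{t_n}<m}\,\frac{(\omega_{t_n}-\zeta_n)^+}{m-\zeta_n}.
\]
Since $\zeta_n<m$, the additional term is nonnegative, so this implies the claimed bound. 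The role of the extra term is that, before the running maximum reaches $m$, the right-hand side of \eqref{eq:thm_Trajectorial_Inequality_Ordered_Case_1} has already built up a ``surplus'', and this surplus is exactly what pays for the ``deficit'' produced at the single time-step in which the maximum crosses the level $m$.

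Two elementary ingredients would be isolated first. (i) A slope estimate: for fixed $m$, the map $\zeta\mapsto\frac{(x-\zeta)^+}{m-\zeta}$ is non-increasing on $(-\infty,m)$ for every $x<m$; equivalently $\frac{(x-\zeta')^+}{m-\zeta'}\ge\frac{(x-\zeta)^+}{m-\zeta}$ whenever $\zeta'\le\zeta<m$ and $x<m$, which after clearing denominators reduces to $(\zeta-\zeta')(m-x)\ge0$. (ii) A telescoping identity obtained by peeling off the $i=n$ summand of the first sum and the $i=n-1$ summand of the second sum in \eqref{eq:thm_Trajectorial_Inequality_Ordered_Case_1}, namely $\Upsilon_n=\Upsilon_{n-1}+\tfrac{1}{m-\zeta_n}D_n$ with $\Upsilon_{n-1}$ evaluated at $(\zeta_1,\dots,\zeta_{n-1})$ and
\[
D_n:=(\omega_{t_n}-\zeta_n)^+-(\omega_{t_{n-1}}-\zeta_n)^++\indicator{\bar\omega_{t_{n-1}}<m\le\bar\omega_{t_n}}(m-\omega_{t_n})-\indicator{m\le\bar\omega_{t_{n-1}},\,\zeta_n\le\omega_{t_{n-1}}}(\omega_{t_n}-\omega_{t_{n-1}}).
\]
This identity is pure reindexing and needs no estimate.

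The base case $n=1$, where $t_0=0$, $\bar\omega_{t_0}=\omega_0<m$, and $\Upsilon_1=\frac{(\omega_{t_1}-\zeta_1)^+}{m-\zeta_1}+\indicator{\omega_0<m\le\bar\omega_{t_1}}\frac{m-\omega_{t_1}}{m-\zeta_1}$, is a direct verification distinguishing whether $\omega_{t_1}\ge\zeta_1$ and whether $\bar\omega_{t_1}\ge m$. For the inductive step I would substitute (ii), apply the strengthened hypothesis to $\Upsilon_{n-1}$, and distinguish three regimes according to the position of the running maximum at $t_{n-1}$ and $t_n$. (1) If $\bar\omega_{t_{n-1}}\ge m$, the first indicator in $D_n$ vanishes and a two-line sign check (on $\omega_{t_{n-1}}-\zeta_n$) shows $D_n\ge0$; since $\Upsilon_{n-1}\ge1$ we get $\Upsilon_n\ge1$, which is the target because $\bar\omega_{t_n}\ge m$ as well. (2) If $\bar\omega_{t_{n-1}}<m\le\bar\omega_{t_n}$ (the crossing step), the second indicator vanishes and $D_n\ge(m-\zeta_n)-(\omega_{t_{n-1}}-\zeta_n)^+$, with equality when $\omega_{t_n}\ge\zeta_n$ and a strict surplus otherwise; hence $\Upsilon_n\ge\Upsilon_{n-1}+1-\frac{(\omega_{t_{n-1}}-\zeta_n)^+}{m-\zeta_n}$, and now the strengthened hypothesis together with fact (i) (applicable since $\omega_{t_{n-1}}\le\bar\omega_{t_{n-1}}<m$ and $\zeta_{n-1}\le\zeta_n$) gives $\Upsilon_{n-1}\ge\frac{(\omega_{t_{n-1}}-\zeta_{n-1})^+}{m-\zeta_{n-1}}\ge\frac{(\omega_{t_{n-1}}-\zeta_n)^+}{m-\zeta_n}$, so $\Upsilon_n\ge1$. (3) If $\bar\omega_{t_n}<m$, both indicators in $D_n$ vanish, $D_n=(\omega_{t_n}-\zeta_n)^+-(\omega_{t_{n-1}}-\zeta_n)^+$, and the same combination of the hypothesis and fact (i) telescopes precisely to $\Upsilon_n\ge\frac{(\omega_{t_n}-\zeta_n)^+}{m-\zeta_n}$, i.e.\ the strengthened statement at level $n$.

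I expect the only genuinely delicate point to be the choice of the strengthened hypothesis. One must carry along exactly $\frac{(\omega_{t_n}-\zeta_n)^+}{m-\zeta_n}$ on $\{\bar\omega_{t_n}<m\}$: nothing weaker suffices, since regime (2) cannot otherwise be closed; and nothing stronger is true, since, for instance, keeping that term without the indicator $\indicator{\bar\omega_{t_n}<m}$, or replacing it by $\max\big(1,\frac{(\omega_{t_n}-\zeta_n)^+}{m-\zeta_n}\big)$, already fails on a path that jumps far above $m$ in a single step (the left-hand side stays bounded while the candidate blows up). Once the right hypothesis is identified, the argument is just the monotonicity fact (i) plus a handful of elementary sign checks, in keeping with the elementary spirit emphasised in the paper.
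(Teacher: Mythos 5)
Your proposal is correct. I checked the telescoping identity ($\Upsilon_n-\Upsilon_{n-1}=\tfrac{1}{m-\zeta_n}D_n$, pure reindexing since only $A_n$ with parameter $\zeta_n$ and $B_{n-1}$ with parameter $\zeta_n$ survive the cancellation), the monotonicity of $\zeta\mapsto\tfrac{(x-\zeta)^+}{m-\zeta}$ for $x<m$, the base case, and all three regimes of the inductive step, and they all close exactly as you say. The strengthening of the inductive hypothesis by the extra term $\indicator{\bar\omega_{t_n}<m}\tfrac{(\omega_{t_n}-\zeta_n)^+}{m-\zeta_n}$ is precisely what is needed to absorb the deficit created at the crossing step (regime~(2)), and your negative comments about why nothing stronger survives (a single large jump across $m$ forces equality in $\Upsilon_1=1$) are accurate.

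One structural remark: the paper does not prove this proposition itself — it cites it as Proposition~3.1 of the reference \citet{Touzi_maxmax}, and later refers to the chain ``equation~(4.3)'' of that paper. The argument there is organised as a direct case analysis conditioning on the first index $k$ with $\bar\omega_{t_{k-1}}<m\le\bar\omega_{t_k}$ and verifying a chain of inequalities, using the same monotonicity-in-$\zeta$ estimate that you isolate as fact~(i). Your induction is in substance the same set of elementary inequalities, but repackaged: the strengthened inductive hypothesis makes the ``surplus that pays for the crossing deficit'' explicit and lets the argument telescope mechanically regime by regime, rather than having to track the crossing index globally. This is a genuine, and arguably cleaner, reorganisation of the original argument; it buys a self-contained, locally checkable proof at the price of having to guess the correct strengthening in advance.

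One small caveat worth flagging: at $m=\omega_0$ (allowed by ``$m\ge\omega_0$'') one has $\bar\omega_{t_0}=m$, so the indicator $\indicator{\bar\omega_{t_0}<m\le\bar\omega_{t_1}}$ is off and the displayed inequality can fail for a path that immediately drops below $m$; your base case implicitly assumes $\omega_0<m$. This is an edge-case issue in the statement as reproduced, not a flaw specific to your proof, but you should state $m>\omega_0$ (or adopt a convention such as $\bar\omega_{t_0^-}:=-\infty$) before running the induction.
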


Next, we recall a process with some special structure in view of \eqref{eq:thm_Trajectorial_Inequality_Ordered_Case_1}. 
This process has been analysed in more detail by \citet{OblSp13}. 
 
\begin{Definition}[Iterated \AY Type Embedding]
\label{def:iterated_AY}
Let $\xi_1,\dots,\xi_n$ be non-decreasing functions and denote $\bar{B}_t:= \sup_{u\leq t} B_u$.
Set $\tau \equiv 0$ and for $i=1,\dots,n$ define  
\begin{numcases}{\tau_i:=}
\inf\left\{ t \geq \tau_{i-1}:B_t \leq \xi_{i}(\bar{B}_t) \right\} & \text{if $B_{\tau_{i-1}}> \xi_{i}(\bar{B}_{\tau_{i-1}}),$} \label{eq:embedding1} \\
\tau_{i-1} & \text{otherwise.}
\label{eq:embedding3}
\end{numcases} 
A continuous martingale $X$ is called an \iAY type embedding based on $\bxi = (\xi_1,\dots,\xi_n)$ if 
\begin{align}
(X_{t_i},\bar{X}_{t_i}) = (B_{\tau_i},\bar{B}_{\tau_i})\ a.s. \qquad \text{for $i=0,\dots,n$.}
\label{eq:changeoftimeAY}
\end{align}
\end{Definition} 

Note that $X$ being a martingale implies that $B_{\tau_i}$ are integrable and all have mean $X_0$. This then implies, by minimality of $\tau_i$, that $(B_{t\land \tau_n}, t\geq 0)$ is a uniformly integrable martingale. If the latter is true then an example of an \iAY type embedding is obtained by taking
$$X_t:=B_{\tau_i\land \left (\tau_{i-1}\lor \frac{t-t_{i-1}}{t_{i} - t }\right)},\quad \textrm{for}\quad t_{i-1}<t\leq t_i,\ i=1,\ldots,n.
$$
Finally, we recall a version of Lemma 4.1 from \citet{Touzi_maxmax}.
\begin{Proposition}[Pathwise Equality]
\label{prop:Pathwise Equality}
Let $\bxi = (\xi_1,\dots,\xi_n)$ be non-decreasing right-continuous functions and let $X$ be an \iAY embedding based on $\bxi$. 
Then, for any $m > X_0$ with $\xi_n(m) < m$, $X$ achieves equality in \eqref{eq:thm_Trajectorial_Inequality_Ordered_Case_1}, i.e.
%\label{eq:relation_zeta_xi_pathwise_equality_result}
\begin{align}\label{eq:pathwise_equality}
\indicator{ \bar{X}_{t_n} \geq m } 
= \Upsilon_n \big(X,m,\bzeta(m) \big)\quad a.s.,
\end{align}
where
\begin{align}
\zeta_i(m) = \min_{j\geq i} \xi_j(m), \qquad i=1,\dots,n.
\label{eq:relation_zeta_xi_pathwise_equality_result}
\end{align} 
\end{Proposition}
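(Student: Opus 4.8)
The plan is to reduce the statement to a single inequality by invoking Proposition~\ref{lem:Trajectorial_Inequality_Ordered_Case}, and then to prove the reverse inequality pathwise, exploiting the defining structure of the iterated \AY stopping rule \eqref{eq:embedding1}--\eqref{eq:embedding3}. First I would observe that $\zeta_i(m)=\min_{j\ge i}\xi_j(m)$ is automatically non-decreasing in $i$ (the minimum is taken over a shrinking index set) and that $\zeta_n(m)=\xi_n(m)<m$ by assumption, so $\zeta_1(m)\le\cdots\le\zeta_n(m)<m$ and Proposition~\ref{lem:Trajectorial_Inequality_Ordered_Case} immediately gives $\indicator{\bar{X}_{t_n}\ge m}\le\Upsilon_n(X,m,\bzeta(m))$ a.s. It then remains to prove the opposite inequality, which I would do pathwise on the full-probability event on which the identification \eqref{eq:changeoftimeAY} holds and $B$ has continuous, non-degenerate paths.

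Next I would record the structural facts that drive everything. Call the $i$-th step \emph{active} if $B_{\tau_{i-1}}>\xi_i(\bar B_{\tau_{i-1}})$; then, by continuity of $B$ and right-continuity of $\xi_i$, one has $X_{t_i}=\xi_i(\bar{X}_{t_i})$ and $\bar{X}_{t_i}\ge\bar{X}_{t_{i-1}}$, whereas if the step is inactive then $(X_{t_i},\bar{X}_{t_i})=(X_{t_{i-1}},\bar{X}_{t_{i-1}})$ and $X_{t_{i-1}}\le\xi_i(\bar{X}_{t_{i-1}})$. In particular $i\mapsto\bar{X}_{t_i}$ is non-decreasing and the running maximum can cross the level $m$ only at an active step. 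Writing $K:=\min\{i\le n:\bar{X}_{t_i}\ge m\}$ (with $K=\infty$ if no such $i$ exists), the goal becomes $\Upsilon_n(X,m,\bzeta(m))=\indicator{K\le n}$.

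I would then evaluate $\Upsilon_n$ along the path by collapsing its indicator terms: $\indicator{\bar\omega_{t_{i-1}}<m\le\bar\omega_{t_i}}$ is nonzero only at $i=K$, and $\indicator{m\le\bar\omega_{t_i},\,\zeta_{i+1}(m)\le\omega_{t_i}}$ vanishes for $i<K$. Two elementary observations, both consequences of the structural facts together with the identity $\zeta_i(m)=\min(\xi_i(m),\zeta_{i+1}(m))$, do most of the work: (i) on $\{\bar{X}_{t_i}\ge m\}$ one has $\indicator{\zeta_{i+1}(m)\le X_{t_i}}(X_{t_{i+1}}-X_{t_i})=X_{t_{i+1}}-X_{t_i}$, because when the indicator fails, $X_{t_i}<\zeta_{i+1}(m)\le\xi_{i+1}(\bar{X}_{t_i})$ forces step $i+1$ to be inactive, i.e.\ $X_{t_{i+1}}=X_{t_i}$; and (ii) on $\{\bar{X}_{t_i}<m\}$ monotonicity gives $X_{t_i}\le\xi_i(m)$, so every call-payoff term $\tfrac{(X_{t_i}-\zeta_j(m))^+}{m-\zeta_j(m)}$ with $\zeta_j(m)\ge\xi_i(m)$ vanishes. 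Observation (ii) already settles the case $K=\infty$: after the harmless cancellation of consecutive call-payoff terms sharing the same $\zeta$, all surviving terms disappear and $\Upsilon_n=0$. For $K=k\le n$, I would substitute (i), telescope the remaining call-payoff differences and the increments $X_{t_{j+1}}-X_{t_j}$ across the maximal blocks on which $\bzeta(m)$ is constant, and combine this with $X_{t_k}=\xi_k(\bar{X}_{t_k})$ at the crossing step and the boundary term $\tfrac{m-X_{t_k}}{m-\zeta_k(m)}$; the net effect collapses to $1$.

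The main obstacle is the last telescoping: organising the cancellations according to the block structure of $\bzeta(m)$ -- equivalently, the right-to-left running minima of $\bigl(\xi_1(m),\dots,\xi_n(m)\bigr)$ -- and carefully handling the interaction of the crossing index $k$ with active and inactive steps. An alternative that sidesteps some of this combinatorics is to revisit the proof of Proposition~\ref{lem:Trajectorial_Inequality_Ordered_Case} itself (Proposition~3.1 of \citet{Touzi_maxmax}): that argument exhibits $\Upsilon_n-\indicator{\bar\omega_{t_n}\ge m}$ as a sum of nonnegative ``slacks'' of elementary one-step \AY inequalities, and one then checks that each slack vanishes along the \iAY path, once more using the barrier identification $X_{t_i}=\xi_i(\bar{X}_{t_i})$ at active steps and monotonicity of the $\xi_i$.
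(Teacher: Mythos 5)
The paper does not give its own proof here: Proposition~\ref{prop:Pathwise Equality} is stated as ``a version of Lemma~4.1 from \citet{Touzi_maxmax}'' and the reader is referred there, so there is no internal argument to compare against. Your proposal is nonetheless a correct, self-contained route, and your observations (i)--(ii) isolate exactly the right active/inactive dichotomy. The telescoping you flag as the ``main obstacle'' does close. Write $f_\zeta(x):=(x-\zeta)^+/(m-\zeta)$ and let $K$ be the crossing index. For $i<K$ the pair $f_{\zeta_i(m)}(X_{t_i})-f_{\zeta_{i+1}(m)}(X_{t_i})$ vanishes by the case split implicit in $\zeta_i(m)=\min(\xi_i(m),\zeta_{i+1}(m))$: either $\zeta_i(m)=\zeta_{i+1}(m)$ and the two calls cancel, or $\zeta_i(m)=\xi_i(m)<\zeta_{i+1}(m)$ and both calls are zero since $X_{t_i}\le\xi_i(m)$. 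The $i=K$ contribution equals $f_{\zeta_K(m)}(X_{t_K})+(m-X_{t_K})/(m-\zeta_K(m))=1$ because $X_{t_K}\ge\zeta_K(m)$. After re-indexing, the post-crossing block is $\sum_{i=K}^{n-1}\big[f_{\zeta_{i+1}(m)}(X_{t_{i+1}})-f_{\zeta_{i+1}(m)}(X_{t_i})-(X_{t_{i+1}}-X_{t_i})/(m-\zeta_{i+1}(m))\big]$, termwise zero: trivially when step $i+1$ is inactive, and otherwise because both $X_{t_i}$ and $X_{t_{i+1}}$ lie in $[\zeta_{i+1}(m),\infty)$ where $f_{\zeta_{i+1}(m)}$ is affine with slope $1/(m-\zeta_{i+1}(m))$. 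One small repair: right-continuity of $\xi_i$ is not by itself enough for the exact barrier identity $X_{t_i}=\xi_i(\bar{X}_{t_i})$ at active steps (the stopped path can sit at its running maximum, or $\xi_i$ can jump there). This is harmless, since the argument only uses the one-sided bound $X_{t_i}\le\xi_i(\bar{X}_{t_i})$ at every stopping time (hence $X_{t_i}\le\xi_i(m)$ on $\{\bar X_{t_i}<m\}$), together with the fact that an active step $i$ with $\bar X_{t_i}\ge m$ gives $X_{t_i}\ge\zeta_i(m)$ -- either because $X_{t_i}=\xi_i(\bar X_{t_i})\ge\xi_i(m)$ or because $X_{t_i}=\bar X_{t_i}\ge m$. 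Your alternative route via vanishing slacks in the proof of Proposition~\ref{lem:Trajectorial_Inequality_Ordered_Case} is, in spirit, what Lemma~4.1 of \citet{Touzi_maxmax} itself does.
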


We note that if we work on the canonical space of continuous functions then \eqref{eq:pathwise_equality} holds pathwise and not only a.s. We also note that the assumption that $X$ is an \iAY type embedding, or that $(B_{\tau_n\land t})$ is a uniformly integrable martingale, may be relaxed as long as $X$ satisfies \eqref{eq:changeoftimeAY}.

\section{Main Result}
\label{sec:main_result}

In our main results, we obtain and compare inequalities for \cadlag submartingales which are directly implied by Proposition \ref{lem:Trajectorial_Inequality_Ordered_Case}.

\subsection{Main Result -- Part 1}
\label{subsec:part_1}

In the first part of our main result we devise a general martingale inequality for $\E{\phi( \bar{X}_{T} )}$ and prove that it is attained under some conditions.

Define 
\begin{equation}
\begin{alignedat}{2}
\mathscr{Z} := \Big\{ \bzeta = ( \zeta_1, \dots, \zeta_n ) \ : \ 
&\zeta_i: (X_0,\infty) \to \R \ \text{ is right-continuous},\\
&\zeta_1(m) \leq \dots \leq \zeta_n(m)<m, \quad n \in \N \Big\}.
\label{eq:zeta_minimal_condition}
\end{alignedat}
\end{equation}

In order to ensure that the expectations we consider are finite we will occasionally need the technical condition that 
%there exists $\alpha > 0$ such that
\begin{equation}
%\begin{alignedat}{2}
\exists \alpha>0 \textrm{ s.t. }\liminf_{m \to \infty} \frac{\zeta_1(m)}{\alpha m }  \geq 1\textrm{ and }
\limsup_{m \to \infty} \frac{\phi(m)}{m^{\gamma}}=0 \quad \text{for some $\gamma < \frac{1}{1-\alpha}$.}
%\end{alignedat}
\label{eq:integrability}
\end{equation}

\begin{Theorem}[Main Result -- Part 1]
\label{thm:main_result_1}
Let $\phi$ a right-continuous non-decreasing function, and $\bzeta = ( \zeta_1, \dots, \zeta_n ) \in \mathscr{Z}$. Then,
\\
{\rm (i)} for all \cadlag submartingale $X$: 
\begin{alignat}{3}
\E{ \phi(\bar{X}_{T}) } 
\leq \mathrm{UB} \left( X, \phi, \bzeta \right) := 
\phi(X_0) +   \int_{(X_0,\infty)}\sum_{i=1}^n \E{\lambda^{\bzeta,m}_i(X_{t_i})} \dd \phi(m) 
\label{eq:martingale_inequality_general}
\end{alignat}
where
\begin{align}
\lambda^{\bzeta,m}_i(x) :=  \frac{\left( x-\zeta_i(m) \right)^+}{m-\zeta_i(m)} 
                                            - \frac{\left( x-\zeta_{i+1}(m) \right)^+}{m-\zeta_{i+1}(m)} \indicator{i<n} ,
\label{eq:lambda_opt} 
\end{align}
{\rm (ii)} if $\zeta_1$ is non-decreasing and satisfies, together with $\phi$, the condition \eqref{eq:integrability}, there exists a continuous martingale which achieves equality in \eqref{eq:martingale_inequality_general}.
\end{Theorem}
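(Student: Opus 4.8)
The plan is to prove (i) by taking expectations in the pathwise inequality of Proposition 2.1 and integrating against $\dd\phi$, and to prove (ii) by exhibiting the \iAY type embedding that turns the pathwise inequality into a pathwise equality via Proposition 2.4.

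\textbf{Part (i).} First I would fix $m>X_0$ and apply Proposition \ref{lem:Trajectorial_Inequality_Ordered_Case} with the constants $\zeta_i=\zeta_i(m)$, which are admissible since $\bzeta\in\mathscr{Z}$ guarantees $\zeta_1(m)\le\dots\le\zeta_n(m)<m$ and $X_0=\omega_0\le m$. This gives, pathwise,
\begin{align*}
\indicator{\bar{X}_{t_n}\ge m}\le \Upsilon_n(X,m,\bzeta(m)).
\end{align*}
Next I would take expectations. The key simplification is that the "indicator-type" terms in $\Upsilon_n$ telescope or vanish in expectation: the terms $\indicator{\bar\omega_{t_{i-1}}<m\le\bar\omega_{t_i}}(m-\omega_{t_i})$ sum (over $i$) with a telescoping structure, and the terms $\indicator{m\le\bar\omega_{t_i},\,\zeta_{i+1}\le\omega_{t_i}}(\omega_{t_{i+1}}-\omega_{t_i})$ have non-positive conditional expectation given $\mathcal{F}_{t_i}$ because $X$ is a submartingale — wait, one must be careful about the sign, since this term is \emph{subtracted}; for a submartingale $\E{\omega_{t_{i+1}}-\omega_{t_i}\mid\mathcal F_{t_i}}\ge 0$, so subtracting it only helps the inequality, i.e. $\E{\Upsilon_n(X,m,\bzeta(m))}\le \sum_{i=1}^n\E{\lambda_i^{\bzeta,m}(X_{t_i})}+(\text{telescoped indicator remainder})$. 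The remaining indicator terms should collapse to give exactly the bracket $\sum_i\E{\lambda_i^{\bzeta,m}(X_{t_i})}$ after accounting for $\Prob{\bar X_{t_n}\ge m}$ on the left; effectively one rearranges to obtain $\Prob{\bar X_{t_n}\ge m}\le \sum_{i=1}^n\E{\lambda_i^{\bzeta,m}(X_{t_i})}$. Finally, integrate both sides against $\dd\phi(m)$ over $(X_0,\infty)$ and use $\E{\phi(\bar X_T)}=\phi(X_0)+\int_{(X_0,\infty)}\Prob{\bar X_T\ge m}\,\dd\phi(m)$ (Fubini/layer-cake, valid since $\phi$ is right-continuous non-decreasing), together with Tonelli to interchange $\EE$ and the $m$-integral on the right. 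Condition \eqref{eq:integrability} is what guarantees the right-hand side is finite: one bounds $\E{\lambda_i^{\bzeta,m}(X_{t_i})}$ using $\lambda_i^{\bzeta,m}(x)\le (x-\zeta_1(m))^+/(m-\zeta_1(m))$ (since the $\lambda_i$ sum to something dominated this way), then $\E{(X_{t_i}-\zeta_1(m))^+}\le \E{(X_T-\zeta_1(m))^+}$ by Jensen/submartingale, and the growth of $\zeta_1$ relative to $m$ combined with the growth restriction on $\phi$ makes $\int \E{(X_T-\zeta_1(m))^+}/(m-\zeta_1(m))\,\dd\phi(m)<\infty$.

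\textbf{Part (ii).} For sharpness I would construct $X$ as an \iAY type embedding and invoke Proposition \ref{prop:Pathwise Equality}. The construction is to choose right-continuous non-decreasing functions $\xi_1,\dots,\xi_n$ so that \eqref{eq:relation_zeta_xi_pathwise_equality_result} holds, i.e. $\zeta_i(m)=\min_{j\ge i}\xi_j(m)$; since $\zeta_1\le\dots\le\zeta_n$ is already the required monotonicity and $\zeta_i$ is the running min of the $\xi_j$ from $i$ onwards, one can simply take $\xi_i:=\zeta_i$ for $i<n$ — but then $\min_{j\ge i}\xi_j=\min(\zeta_i,\dots,\zeta_n)=\zeta_i$ automatically because the $\zeta$'s are non-decreasing, so this choice works directly, with $\xi_n:=\zeta_n$ as well. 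One must check that the stopping times $\tau_i$ in Definition \ref{def:iterated_AY} are a.s.\ finite and that $(B_{t\wedge\tau_n})$ is uniformly integrable, so that the explicit time-changed process displayed after Definition \ref{def:iterated_AY} is a genuine continuous martingale; the assumption that $\zeta_1$ is non-decreasing (so $\xi_1$ is non-decreasing, making $\tau_1$ well-defined as an \AY-type stopping time) together with the integrability condition \eqref{eq:integrability} is exactly what secures this — \eqref{eq:integrability} controls the tail of $\bar B_{\tau_n}$ and hence the uniform integrability. Then Proposition \ref{prop:Pathwise Equality} gives $\indicator{\bar X_{t_n}\ge m}=\Upsilon_n(X,m,\bzeta(m))$ a.s.\ for every $m>X_0$ with $\xi_n(m)=\zeta_n(m)<m$ (which holds for all such $m$ by definition of $\mathscr Z$), so every inequality used in Part (i) becomes an equality; retracing the steps (the submartingale terms vanish since $X$ is here a true martingale) yields $\E{\phi(\bar X_T)}=\mathrm{UB}(X,\phi,\bzeta)$.

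\textbf{Main obstacle.} The routine part is the algebra of taking expectations and the telescoping of the indicator terms. The genuinely delicate point is the integrability/finiteness argument: making precise how \eqref{eq:integrability} forces $\mathrm{UB}(X,\phi,\bzeta)<\infty$ in (i) and, in (ii), ensuring the constructed embedding is a uniformly integrable martingale with $\E{\phi(\bar X_T)}<\infty$ so that the pathwise equality can legitimately be integrated term by term. I would expect to spend most of the effort pinning down the precise growth bookkeeping there, and in verifying that the exceptional set of $m$'s (where $\zeta_n(m)=m$, or where the embedding degenerates) is $\dd\phi$-null or empty by the definition of $\mathscr Z$.
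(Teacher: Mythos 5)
Your Part (i) plan is essentially the paper's approach (take expectations in the pathwise inequality, integrate against $\dd\phi$, interchange via Fubini/Tonelli), though two remarks are in order. First, condition \eqref{eq:integrability} plays no role in (i): if the right-hand side of \eqref{eq:martingale_inequality_general} is $+\infty$ the inequality is vacuous, and the only finiteness one needs is $\E{|\lambda_i^{\bzeta,m}(X_{t_i})|}<\infty$ for each fixed $m$, which follows from $\E{|X_{t_i}|}<\infty$. Second, the cross-time terms in $\Upsilon_n$ do not telescope; rather the sum $\sum_i\indicator{\bar\omega_{t_{i-1}}<m\le\bar\omega_{t_i}}(m-\omega_{t_i})/(m-\zeta_i)$ runs over the disjoint events on which the level $m$ is first crossed, and each such term has non-positive expectation by an optional-stopping argument at the first passage time of $m$ (one has $X_{\sigma_m}\ge m$ and $\E{X_{t_i}\mid\mathcal F_{\sigma_m\wedge t_i}}\ge X_{\sigma_m\wedge t_i}$ on $\{\sigma_m\le t_i\}$). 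The term you do treat — $-\indicator{\cdots}(\omega_{t_{i+1}}-\omega_{t_i})$ — you handle correctly.

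The genuine gap is in Part (ii). You propose building an $n$-step iterated Az\'ema--Yor embedding with $\xi_i:=\zeta_i$, and you argue the monotonicity requirement of Definition \ref{def:iterated_AY} is met because ``$\zeta_1\le\dots\le\zeta_n$ is already the required monotonicity.'' But that is monotonicity in the index $i$, not in the argument $m$: Definition \ref{def:iterated_AY} (and Proposition \ref{prop:Pathwise Equality}) require each $\xi_j$ to be non-decreasing \emph{as a function of $m$}. The class $\mathscr{Z}$ only imposes right-continuity on the $\zeta_i$'s, and the hypothesis of (ii) only makes $\zeta_1$ non-decreasing in $m$ — nothing is assumed about $\zeta_2,\dots,\zeta_n$. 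So you cannot take $\xi_i=\zeta_i$ in general, the stopping rule in \eqref{eq:embedding1} is not of the admissible type, and Proposition \ref{prop:Pathwise Equality} does not apply. There is also a second, related gap: you assert that \eqref{eq:integrability} controls the tail of $\bar B_{\tau_n}$ and hence uniform integrability, but \eqref{eq:integrability} constrains only $\zeta_1$, whereas the law of $\bar B_{\tau_n}$ in an $n$-step embedding depends on all the boundaries; your claim would need a separate argument. The paper avoids both difficulties by constructing a \emph{one-step} martingale: run a single Az\'ema--Yor stopping $\tau_{\zeta_1}$ on the interval $[0,t_1]$ and then hold the process constant. Because $X_{t_1}=\dots=X_{t_n}$ and $\bar X_{t_1}=\dots=\bar X_{t_n}$ on such a path, the static part of $\Upsilon_n$ telescopes to $(X_{t_1}-\zeta_1)^+/(m-\zeta_1)$, the crossing indicator is nonzero only for $i=1$, and the increment terms vanish, so $\Upsilon_n(X,m,\bzeta)=\Upsilon_1(X,m,\bzeta)=\indicator{\bar X_{t_1}\ge m}$ by the $n=1$ case of Proposition \ref{prop:Pathwise Equality}. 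Only $\zeta_1$ enters the construction, its monotonicity is exactly what the theorem assumes, and the explicit tail $\Prob{\bar X_{t_n}\ge y}=\exp(-\int_{(X_0,y]}\frac{\dd z}{z-\zeta_1(z)})\le \mathrm{const}\cdot y^{-1/(1-\alpha)}$ together with \eqref{eq:integrability} gives both $\E{\phi(\bar X_{t_n})}<\infty$ and uniform integrability. You should replace your $n$-step construction with this one-step one (or otherwise impose monotonicity of all $\zeta_i$, which would weaken the statement).
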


\begin{Remark}[Optimization over $\bzeta$]
\label{rem:Optimization over zeta}
If $X$ and $t_1,\dots,t_n$ are fixed we can optimize \eqref{eq:martingale_inequality_general} over $\bzeta \in \mathscr{Z}$ to obtain a minimizer $\bzeta^{\star}$. 
Clearly, more intermediate points $t_i$ in \eqref{eq:martingale_inequality_general} can only improve the bound for this particular process $X$. 
However, only for very special processes (e.g. the \iAY type embedding) there is hope that \eqref{eq:martingale_inequality_general} will hold with equality.
This is, loosely speaking, because a finite number of intermediate marginal law constraints does not, in general, determine uniquely the law of the maximum at terminal time $t_n$.
\end{Remark}

\begin{proof}[Proof of Theorem \ref{thm:main_result_1}]
Equation \eqref{eq:martingale_inequality_general} follows from \eqref{eq:thm_Trajectorial_Inequality_Ordered_Case_1} by taking expectations and integrating against $\dd \phi$. Note that for a fixed $m$, $\E{|\lambda^{\bzeta,m}_i(X_{t_i})|}<\infty$ for $i=1,\ldots, n$, since $\E{|X_{t_i}|}<\infty$ by the submartingale property.

If $\zeta_1$ is non-decreasing and $\zeta_1(m) \geq \alpha m$ for $m$ large, $\alpha > 0$, we define $X$ by 
\begin{align*}
X_t = 
\begin{cases} 
	B_{ \frac{t}{t_1 - t } \wedge \tau_{\zeta_1} } &\mbox{if } t < t_1, \\
	B_{\tau_{\zeta_1}} & \mbox{if } t \geq t_1. 
\end{cases}
\end{align*}
where $B$ is a Brownian motion, $B_0 = X_0$, and $\tau_{\zeta_1} := \inf \left\{ u>0 \ : \ B_u \leq \zeta_1(\bar{B}_u) \right\}$. 
$X$ is a uniformly integrable martingale by similar arguments as in the proof of \citet[Prop. 3.5]{OblSp13}.
Then, one readily verifies together with Proposition \ref{prop:Pathwise Equality} that 
\begin{align*}
\Upsilon_n(X,m,\bzeta) = \Upsilon_1(X,m,\bzeta) = \indicator{ \bar{X}_{t_1} \geq m } = \indicator{ \bar{X}_{t_n} \geq m }.
\end{align*}
Condition \eqref{eq:integrability} ensures that $\E{ \phi(\bar{X}_{t_n}) } < \infty$ because then by excursion theoretical results, cf. e.g. \citet{rogers89g}, we compute
\begin{align*}
\Prob{ \bar{X}_{t_n} \geq y } 
&= \exp\left( - \int_{(X_0,y]} \frac{1}{z - \zeta_1(z)} \dd z \right)
\leq \mathrm{const} \cdot \exp \left( - \int_{(1,y]} \frac{1}{z - \alpha z } \dd z \right) \\
&= \mathrm{const} \cdot y^{-\frac{1}{1-\alpha}}
\end{align*}
for large $y$. 
%Set $\phi^m(y) := \indicator{y \geq m}$. 
Now the claim follows from
\begin{align*}
\E{ \phi(\bar{X}_{t_n}) } &= \phi(X_0) + \int_{(X_0,\infty)} \E{ \indicator{ \bar{X}_{t_n} \geq m } } \dd \phi(m) \\
						   &=\phi(X_0) + \int_{(X_0,\infty)} \mathrm{UB}\left( X, \indic{[m,\infty)}, \bzeta \right) \dd \phi(m) \\
						   &=\phi(X_0) + \mathrm{UB}\left( X, \phi, \bzeta \right)
\end{align*}
where we applied Fubini's theorem.
\end{proof}

\subsection{Main Result -- Part 2}
\label{subsec:part_2}

As mentioned in the introduction, the novelty of our martingale inequality from Theorem \ref{thm:main_result_1} is that it uses information about the process at intermediate times.
The second part of our main result sheds light on the question whether this information gives more accurate bounds than e.g. in the case when no information about the process at intermediate times is used.
In short, the answer is negative, i.e. we demonstrate that for a large class of $\tbzeta$'s there is no \enquote{universally better} choice of $\bzeta$ in the sense that it yields a tighter bound in the class of inequalities for $\E{ \phi(\bar{X}_{T}) }$ from Theorem \ref{thm:main_result_1}.

To avoid elaborate technicalities, we impose additional conditions on $\bzeta \in \mathscr{Z}$ and $\phi$ below. Many of these conditions could be relaxed to obtain a slightly stronger, albeit more involved, statement in Theorem \ref{thm:main_result_2}. We define 
\begin{align}
\mathscr{Z}^{\mathrm{cts}} := \Big\{ \bzeta \in \mathscr{Z} \ : \ \ & \text{$\bzeta$ are continuous} \Big \}
\end{align}
and
\begin{equation}
\begin{split}
\tilde{ \mathscr{Z} } := \Big\{ \bzeta \in \mathscr{Z}^{\mathrm{cts}} \ : \ \ & \text{$\bzeta$ are strictly increasing and} \\
&\liminf_{m \to \infty} \zeta_1(m)/\alpha m \geq 1,~\mbox{for some}~\alpha>0,  \\
& \zeta_1 = \dots = \zeta_n~\mbox{on}~(X_0,X_0 + \epsilon),~\mbox{for some}~\epsilon>0 \Big\}. 
\end{split}
\label{eq:definition_Z}
\end{equation} 
Before we proceed, we want to argue that the set $\tilde{\mathscr{Z}}$ arises quite naturally.
In the setting of Remark \ref{rem:Optimization over zeta}, if $X$ is a martingale such that its marginal laws
\begin{align*}
\mu_1:=\mathcal{L}\left( X_{t_1} \right), \ \ \ \dots  , \ \ \ \mu_n := \mathcal{L}\left( X_{t_n} \right)
\end{align*}
satisfy Assumption \text{$\circledast$} of \citet{OblSp13}, $\int ( x- \zeta )^+ \mu_i(\dd x) < \int ( x- \zeta )^+ \mu_{i+1}(\dd x)$ for all $\zeta$ in the interior of the support of $\mu_{i+1}$ and their barycenter functions satisfy the mean residual value property of \citet{MadanYor} close to $X_0$ and have no atoms at the left end of support, then the optimization over $\bzeta$ as described in Remark \ref{rem:Optimization over zeta} yields a unique $\tbzeta^{\star} \in \tilde{\mathscr{Z}}$.
Hence, the set of these $\tilde{\mathscr{Z}}$ seems to be a \enquote{good candidate set} for $\bzeta$'s to be used in Theorem \ref{thm:main_result_1}.

The statement of the Theorem \ref{thm:main_result_2} concerns the negative orthant of $\mathscr{Z}^{\mathrm{cts}}$, %$\tilde{\mathscr{Z}}$,
\begin{equation}
\begin{alignedat}{2}
\mathscr{Z}^{\mathrm{cts}}_{-}( \phi, \tbzeta ) := \Big\{ \bzeta \in \mathscr{Z}^{\mathrm{cts}} \ : \ &\mathrm{UB} \left(X, \phi, \bzeta \right) \leq \mathrm{UB} \left(X, \phi, \tbzeta \right) \text{ for all \cadlag} \\ 
&
  \text{submartingales $X$ and $<$ for at least one $X$} \Big\}.
\label{eq:orthant}
\end{alignedat}
\end{equation}

\begin{Theorem}[Main Result -- Part 2]
\label{thm:main_result_2}
Let $\phi$ be a right-continuous, strictly increasing function. Then, for $\tbzeta \in \tilde{\mathscr{Z}}$ such that \eqref{eq:integrability} holds we have
\begin{align}
\mathscr{Z}^{\mathrm{cts}}_{-}( \phi, \tbzeta ) = \emptyset.
\end{align}
\end{Theorem}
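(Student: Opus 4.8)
The plan is to show that no $\bzeta\in\mathscr{Z}^{\mathrm{cts}}$ can be "universally better" than a given $\tbzeta\in\tilde{\mathscr{Z}}$ by exhibiting, for any candidate $\bzeta$, an explicit \cadlag submartingale $X$ on which $\mathrm{UB}(X,\phi,\bzeta)>\mathrm{UB}(X,\phi,\tbzeta)$. Since $\tbzeta\in\tilde{\mathscr{Z}}$, Part (ii) of Theorem \ref{thm:main_result_1} provides a continuous martingale achieving equality in \eqref{eq:martingale_inequality_general} for $\tbzeta$, namely (by the construction there, iterated along all $n$ stopping rules $\xi_i$ reconstructed from $\tbzeta$ via the \iAY embedding of Proposition \ref{prop:Pathwise Equality}) an \iAY type embedding $X^{\tbzeta}$ with $\mathrm{UB}(X^{\tbzeta},\phi,\tbzeta)=\E{\phi(\bar X^{\tbzeta}_{t_n})}$. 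For such $X^{\tbzeta}$, since \eqref{eq:martingale_inequality_general} is always an \emph{upper} bound, $\mathrm{UB}(X^{\tbzeta},\phi,\bzeta)\geq\E{\phi(\bar X^{\tbzeta}_{t_n})}=\mathrm{UB}(X^{\tbzeta},\phi,\tbzeta)$ automatically; so the only thing that can fail membership of $\bzeta$ in $\mathscr{Z}^{\mathrm{cts}}_{-}(\phi,\tbzeta)$ is either strict inequality on this very process (done), or else equality on $X^{\tbzeta}$ but strict inequality in the wrong direction on some \emph{other} process. Thus I would split into two cases.

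\textbf{Case 1: $\mathrm{UB}(X^{\tbzeta},\phi,\bzeta)>\mathrm{UB}(X^{\tbzeta},\phi,\tbzeta)$.} Then $\bzeta\notin\mathscr{Z}^{\mathrm{cts}}_{-}(\phi,\tbzeta)$ immediately, since the defining condition requires $\mathrm{UB}(X,\phi,\bzeta)\leq\mathrm{UB}(X,\phi,\tbzeta)$ for \emph{all} submartingales.

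\textbf{Case 2: $\mathrm{UB}(X^{\tbzeta},\phi,\bzeta)=\mathrm{UB}(X^{\tbzeta},\phi,\tbzeta)$, i.e. $X^{\tbzeta}$ also attains equality in \eqref{eq:martingale_inequality_general} for $\bzeta$.} Here I would argue that $\bzeta$ must coincide with $\tbzeta$ on a neighbourhood of $X_0$, or more precisely that $\bzeta=\tbzeta$ \emph{wherever it matters for $X^{\tbzeta}$}, using the rigidity in Proposition \ref{prop:Pathwise Equality}: equality in the pathwise inequality \eqref{eq:thm_Trajectorial_Inequality_Ordered_Case_1} forces, via \eqref{eq:relation_zeta_xi_pathwise_equality_result}, that $\min_{j\ge i}\xi_j(m)$ matches $\zeta_i(m)$ along the range of $\bar X^{\tbzeta}$; combined with $\bzeta,\tbzeta$ continuous and strictly increasing (for $\tbzeta$) I expect this to pin down $\bzeta$ on $(X_0,X_0+\epsilon)$. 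But if $\bzeta\ne\tbzeta$ as elements of $\mathscr{Z}^{\mathrm{cts}}$, there is some $m^{\star}$ with, say, $\zeta_i(m^{\star})\ne\tilde\zeta_i(m^{\star})$ for some $i$. I would then build a \emph{second} martingale, $X^{\bzeta}$, the \iAY type embedding tuned to $\bzeta$ (again via Theorem \ref{thm:main_result_1}(ii), provided $\bzeta$ satisfies the integrability hypothesis — and if it does not, $\mathrm{UB}(X^{\bzeta},\phi,\bzeta)=+\infty$ while the left side is finite, contradicting the bound being attained, so that case needs a small separate remark), and on it $\mathrm{UB}(X^{\bzeta},\phi,\bzeta)=\E{\phi(\bar X^{\bzeta}_{t_n})}\le\mathrm{UB}(X^{\bzeta},\phi,\tbzeta)$ with the last inequality \emph{strict} because $\tbzeta\ne\bzeta$ means $X^{\bzeta}$ does not attain equality for $\tbzeta$ (strict gap somewhere in the $m$-integral, using strict monotonicity of $\phi$ so the gap is not killed by $\dd\phi$). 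That gives $\mathrm{UB}(X^{\bzeta},\phi,\bzeta)<\mathrm{UB}(X^{\bzeta},\phi,\tbzeta)$, i.e. the strict inequality required by \eqref{eq:orthant} holds for $X=X^{\bzeta}$; together with Case 1's failure being impossible in Case 2, we would need to recheck whether $\bzeta$ could still satisfy the $\le$-for-all-$X$ half of the definition. This is where the real work lies.

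\textbf{Main obstacle.} The crux is Case 2: I must produce a submartingale $X$ violating $\mathrm{UB}(X,\phi,\bzeta)\le\mathrm{UB}(X,\phi,\tbzeta)$ — i.e. on which $\bzeta$ gives a \emph{worse} (larger) bound than $\tbzeta$ — \emph{given that} $\bzeta\ne\tbzeta$ but $\bzeta$ already matches $\tbzeta$ near $X_0$. The natural candidate is the \iAY embedding $X^{\bzeta}$ adapted to $\bzeta$ itself, for which $\mathrm{UB}(X^{\bzeta},\phi,\bzeta)$ is exactly attained and hence minimal among all $\bzeta'$, so in particular $\le\mathrm{UB}(X^{\bzeta},\phi,\tbzeta)$ — which is the \emph{wrong} direction. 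So $X^{\bzeta}$ alone does not suffice; one needs a process where $\tbzeta$ is (near-)optimal but $\bzeta$ is not, e.g. a perturbation or mixture that keeps the marginals compatible with the \iAY structure encoded by $\tbzeta$ while forcing the $\bzeta$-bound to overshoot. I expect the construction to exploit the freedom described in Remark \ref{rem:Optimization over zeta} — that finitely many marginal constraints do not determine the law of the maximum — and the condition $\zeta_1=\dots=\zeta_n$ near $X_0$ in $\tilde{\mathscr{Z}}$ to localize the discrepancy, but pinning down this process and verifying the strict inequality (carefully tracking where the $\dd\phi$-integral picks up mass, using that $\phi$ is strictly increasing) is the technical heart of the argument.
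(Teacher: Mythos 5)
Your Case 1 is correct and you have correctly identified where the difficulty lies, but there is a genuine gap: you do not resolve Case 2, and the resolution is that \emph{Case 2 is vacuous} — you should never leave the process $X^{\tbzeta}$. The paper's Proposition \ref{prop:positive_hedging_error} (which you did not invoke) shows that if $\bzeta\neq\tbzeta$ then on the \iAY embedding $X$ associated to $\tbzeta$ there is a nonempty open interval $(m_1,m_2)$ on which
$\mathrm{UB}(X,\indic{[m,\infty)},\tbzeta)<\mathrm{UB}(X,\indic{[m,\infty)},\bzeta)$, while by Propositions \ref{lem:Trajectorial_Inequality_Ordered_Case} and \ref{prop:Pathwise Equality} the weak inequality $\leq$ holds for every $m>X_0$. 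Integrating against $\dd\phi$ and using that $\phi$ is strictly increasing (so that $(m_1,m_2)$ carries positive $\dd\phi$-mass) then gives $\mathrm{UB}(X^{\tbzeta},\phi,\tbzeta)<\mathrm{UB}(X^{\tbzeta},\phi,\bzeta)$ outright, i.e.\ you are always in your Case 1. The proof thus hinges on a rigidity statement you only gesture at: the pathwise equality in Proposition \ref{prop:Pathwise Equality} is so tight that any $\bzeta\neq\tbzeta$ makes the pathwise inequality \eqref{eq:thm_Trajectorial_Inequality_Ordered_Case_1} strict on an event of positive probability, and the verification requires splitting into the two directions of disagreement (the paper's Cases A and B: $\tilde\zeta_j>\zeta_j$ vs.\ $\tilde\zeta_j<\zeta_j$ on some interval) and tracing which concrete inequality in the proof of Proposition \ref{lem:Trajectorial_Inequality_Ordered_Case} becomes strict on the event $\{X_{t_j}=\tilde\zeta_j(\bar X_{t_j}),\ \bar X_{t_j}\in(m_1,m_2),\ \bar X_{t_{j-1}}<X_0+\epsilon\}$, which has positive probability by the structure of $\tilde{\mathscr{Z}}$ (in particular the condition $\zeta_1=\dots=\zeta_n$ near $X_0$ and strict monotonicity).

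Your attempt to escape Case 2 by switching to the \iAY embedding $X^{\bzeta}$ tuned to $\bzeta$ is, as you yourself observe, doomed: on that process the $\bzeta$-bound is attained and hence \emph{smaller}, which goes the wrong way for disproving membership in $\mathscr{Z}^{\mathrm{cts}}_-$. No auxiliary process is needed; the point is that $X^{\tbzeta}$ alone already separates $\tbzeta$ from every other $\bzeta\in\mathscr{Z}^{\mathrm{cts}}$, and establishing that separation is the technical content of Proposition \ref{prop:positive_hedging_error}. Also note that the integrability worry you raise about $\bzeta$ is harmless: only $\E{\phi(\bar X^{\tbzeta}_{t_n})}$ (controlled by \eqref{eq:integrability} for $\tbzeta$) needs to be finite; if $\mathrm{UB}(X^{\tbzeta},\phi,\bzeta)=+\infty$ the strict inequality is immediate.
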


The above result essentially says that no martingale inequality in \eqref{eq:martingale_inequality_general} is universally better than another one. For any choice $\tbzeta \in \tilde{\mathscr{Z}}$, the corresponding martingale inequality \eqref{eq:martingale_inequality_general} can not be strictly improved by some other choice of $\bzeta \in \mathscr{Z}^{\mathrm{cts}}$, i.e.\ no other $\bzeta$ would lead to a better upper bound for all submartingales and strictly better for some submartingale. The key ingredient to prove this statement is isolated in the following Proposition.
\begin{Proposition}[Positive Error]
\label{prop:positive_hedging_error}
Let $\tbzeta \in \tilde{\mathscr{Z}}$ and $\bzeta \in \mathscr{Z}^{\mathrm{cts}}$ satisfy $\tbzeta \neq \bzeta$. Then there exists a non-empty interval $(m_1,m_2) \subseteq (X_0,\infty)$ such that 
\begin{eqnarray*}
\mathrm{UB} \left(X,\indic{[m,\infty)},\tbzeta \right) 
< 
\mathrm{UB} \left( X,\indic{[m,\infty)},\bzeta \right)
&\mbox{for all}&
m\in(m_1,m_2),
\label{eq:pos_hedging_error}
\end{eqnarray*}
where $X$ is an \iAY type embedding based on some $\tbxi$.
\end{Proposition}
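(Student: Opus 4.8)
The plan is to exploit the pathwise equality (Proposition \ref{prop:Pathwise Equality}): if $X$ is an \iAY type embedding based on some $\tbxi$, then $\mathrm{UB}(X,\indic{[m,\infty)},\tbzeta) = \Prob{\bar{X}_{t_n}\geq m}$ exactly, provided $\tbzeta$ is the lower-envelope transform \eqref{eq:relation_zeta_xi_pathwise_equality_result} of $\tbxi$. So the first step is to reverse this: given $\tbzeta \in \tilde{\mathscr{Z}}$, construct non-decreasing right-continuous $\tbxi = (\tilde\xi_1,\dots,\tilde\xi_n)$ with $\min_{j\geq i}\tilde\xi_j(m) = \tilde\zeta_i(m)$ for all $m$ and all $i$ — e.g.\ take $\tilde\xi_n = \tilde\zeta_n$ and, going backwards, $\tilde\xi_i$ to agree with $\tilde\zeta_i$ wherever $\tilde\zeta_i < \tilde\zeta_{i+1}$ and to be pushed up (e.g.\ to the value of $\tilde\xi_{i+1}$, or larger) on the plateaux where $\tilde\zeta_i = \tilde\zeta_{i+1}$; the conditions defining $\tilde{\mathscr{Z}}$ (strict monotonicity, the common initial segment, the linear lower bound) guarantee such a choice exists, is admissible in Definition \ref{def:iterated_AY}, and that the associated $\tau_n$ gives a UI martingale so that the embedding $X$ exists with $\E{\phi(\bar X_{t_n})}<\infty$ via the excursion-theoretic estimate already used in the proof of Theorem \ref{thm:main_result_1}. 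Fix this $X$ for the rest of the argument.

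The second step is the key inequality. For this $X$ we have, for every $m>X_0$ with $\tilde\zeta_n(m)<m$,
\begin{align*}
\mathrm{UB}(X,\indic{[m,\infty)},\tbzeta) = \E{\Upsilon_n(X,m,\tbzeta(m))} = \Prob{\bar X_{t_n}\geq m} = \E{\indicator{\bar X_{t_n}\geq m}},
\end{align*}
whereas for the competing $\bzeta\in\mathscr{Z}^{\mathrm{cts}}$ Proposition \ref{lem:Trajectorial_Inequality_Ordered_Case} gives the pathwise bound $\indicator{\bar X_{t_n}\geq m}\leq \Upsilon_n(X,m,\bzeta(m))$, hence $\mathrm{UB}(X,\indic{[m,\infty)},\bzeta)\geq \Prob{\bar X_{t_n}\geq m}$. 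Therefore the difference $\Delta(m):=\mathrm{UB}(X,\indic{[m,\infty)},\bzeta)-\mathrm{UB}(X,\indic{[m,\infty)},\tbzeta) = \E{\Upsilon_n(X,m,\bzeta(m))-\indicator{\bar X_{t_n}\geq m}}$ is automatically $\geq 0$, and it remains to show it is $>0$ on some non-empty interval. Since $\Delta(m)$ is the expectation of a non-negative quantity, $\Delta(m)=0$ would force $\Upsilon_n(X,m,\bzeta(m)) = \indicator{\bar X_{t_n}\geq m}$ a.s., i.e.\ $X$ would also achieve pathwise equality in \eqref{eq:thm_Trajectorial_Inequality_Ordered_Case_1} for the parameter $\bzeta(m)$. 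The plan is to argue that this cannot hold for $m$ in an interval unless $\bzeta \equiv \tbzeta$ there, contradicting $\tbzeta\neq\bzeta$.

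The third and hardest step is thus this rigidity claim: equality in the pathwise inequality for a given $m$ pins down the stopping structure, and matching it against the known \AY structure of our embedding $X$ forces $\zeta_i(m)=\tilde\zeta_i(m)$. Concretely, I would inspect where slack enters \eqref{eq:thm_Trajectorial_Inequality_Ordered_Case_1}: the proof of Proposition \ref{lem:Trajectorial_Inequality_Ordered_Case} in \cite{Touzi_maxmax} builds $\Upsilon_n$ from telescoping supermartingale-type terms, and equality a.s.\ requires, on the event that the barycentre line through $(\zeta_i(m),0)$ and $(m,1)$ is hit, that $X$ actually touch level $\zeta_i(m)$ before exceeding $m$ on the $i$-th block. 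For the \AY embedding $X$, on the event $\{m\leq\bar X_{t_{i-1}}\}^c\cap\{\bar X_{t_i}\geq m\}$, the running minimum of $X$ over $(\tau_{i-1},\tau_i]$ before reaching $m$ equals $\tilde\zeta_i(m)$ by construction (this is exactly the content of \eqref{eq:relation_zeta_xi_pathwise_equality_result} / Lemma 4.1 of \cite{Touzi_maxmax}); since $\tbxi$ was chosen strictly increasing-compatible and $X$ genuinely realises every level near $X_0$, that event has positive probability for $m$ ranging over a non-degenerate interval. Comparing with the equality condition forced by $\bzeta$ gives $\zeta_i(m)=\tilde\zeta_i(m)$ for each $i$ on any interval where $\Delta\equiv 0$; by continuity of both $\bzeta$ and $\tbzeta$ and $\tbzeta\neq\bzeta$, the set $\{m:\bzeta(m)=\tbzeta(m)\}$ is closed with non-empty complement, so near a boundary point of that complement we find $(m_1,m_2)$ on which $\bzeta\neq\tbzeta$ pointwise and hence, by the rigidity, $\Delta(m)>0$. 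The main obstacle is making the "equality forces the minimum to equal $\zeta_i(m)$, hence forces $\zeta_i(m)=\tilde\zeta_i(m)$" step fully rigorous: it requires carefully tracking which indicator terms in $\Upsilon_n$ are active with positive probability for our specific embedding and on which $m$-interval, and ruling out accidental cancellations among the $n$ blocks — this is where the structural conditions bundled into $\tilde{\mathscr{Z}}$ (strict monotonicity and the common germ at $X_0$) do the real work.
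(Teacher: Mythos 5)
Your overall strategy — construct an \iAY embedding $X$ from $\tbzeta$ via a suitable $\tbxi$, note that $\mathrm{UB}(X,\indic{[m,\infty)},\tbzeta)=\Prob{\bar X_{t_n}\geq m}$ by Proposition \ref{prop:Pathwise Equality} while $\mathrm{UB}(X,\indic{[m,\infty)},\bzeta)\geq\Prob{\bar X_{t_n}\geq m}$ by Proposition \ref{lem:Trajectorial_Inequality_Ordered_Case}, and then force strict inequality on an interval by showing the difference $\E{\Upsilon_n(X,m,\bzeta)-\indicator{\bar X_{t_n}\geq m}}$ cannot vanish when $\bzeta(m)\neq\tbzeta(m)$ — is the same as the paper's. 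However, the step that actually carries the proof is left as a \enquote{plan}, and the heuristic you offer for it is incorrect. Specifically, the claim that \emph{\enquote{the running minimum of $X$ over $(\tau_{i-1},\tau_i]$ before reaching $m$ equals $\tilde\zeta_i(m)$ by construction}} is false: in the \AY embedding the running minimum over a block before $\bar B$ reaches $m$ is a genuinely random quantity (the process stays \emph{strictly above} the moving boundary $\tilde\xi_i(\bar B_t)$ until the stopping time, and that stopping time need not coincide with the first passage to level $m$). What is pinned down deterministically is only $X_{\tau_i}=\tilde\xi_i(\bar X_{\tau_i})$, and it is this relation — evaluated on an event where $\bar X_{t_j}$ lies in a chosen small interval and $\bar X_{t_{j-1}}$ is still below $X_0+\epsilon$ — that the paper exploits.

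Two further pieces that your proposal does not supply are exactly where the real work lies. First, for $n>1$ the telescoping form of $\Upsilon_n$ admits potential cancellations between blocks, so pointwise strictness in a single term does not immediately give $\Upsilon_n(X,m,\bzeta)>\indicator{\bar X_{t_n}\geq m}$; the paper neutralizes this with the constancy property \eqref{eq:iAY_embedding_constant} (once $X$ lands on $\tilde\xi_j$ above $X_0+\epsilon$ it is frozen), which collapses $\Upsilon_n$ to $\Upsilon_j$ on the relevant event. Second, to exhibit the strict pathwise inequality with positive probability one must split according to the \emph{sign} of $\tilde\zeta_j-\zeta_j$ on an interval (the paper's Cases A and B): if $\zeta_j$ under-shoots $\tilde\zeta_j$ then $(X_{t_j}-\zeta_j(m))^+>0$ on an event where $\bar X_{t_n}<m$, and if it over-shoots then $(X_{t_j}-\zeta_j(m))^+=0>X_{t_j}-\zeta_j(m)$ on an event where $\bar X_{t_n}\geq m$; in each case a specific inequality in the proof of Proposition \ref{lem:Trajectorial_Inequality_Ordered_Case} is shown to be strict. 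Your contrapositive argument via \enquote{$\Delta\equiv 0$ on an interval forces $\bzeta=\tbzeta$ there} is a valid packaging of this, but without the constancy property, the case split, and the correction of the \enquote{running minimum} claim, the rigidity step remains unproved. As written, this is a genuine gap rather than a complete alternative proof.
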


\begin{proof}
To each $\tbzeta \in \tilde{\mathscr{Z}}$ we can associate a non-decreasing and continuous stopping boundary $\tbxi$  which satisfies
\begin{subequations}
\begin{alignat}{3}
& \tilde{\xi}_n(m) < \dots < \tilde{\xi}_1(m)<m \qquad &&\forall m \in (X_0,X_0+\epsilon),
\label{eq:relation_zeta_xi_1} \\
&  \tbxi(m) = \tbzeta(m) \qquad && \forall m \geq X_0 + \epsilon,
\label{eq:relation_zeta_xi_2}  
\end{alignat}
\end{subequations}
for some $\epsilon>0$, and hence
\begin{align}
\tilde{\zeta}_i(m) = \min_{j\geq i} \tilde{\xi}_j(m) \qquad \forall m > X_0.
\label{eq:relation_zeta_xi}  
\end{align}
Fix such a $\tbxi$ and let $X$ be an \iAY type embedding based on this $\tbxi$.
Let $j \geq 1$.
Using the notation of Definition \ref{def:iterated_AY}, it follows by monotonicity of $\tbxi$, \eqref{eq:relation_zeta_xi_2} and \eqref{eq:relation_zeta_xi} that on the set $\{ B_{\tau_j} = \tilde{\xi}_j( \bar{B}_{\tau_j} ), \ \bar{B}_{\tau_j} \geq X_0 + \epsilon \}$ we have $B_{\tau_j} = \tilde{\xi}_{j}(\bar{B}_{\tau_j}) \leq \tilde{\xi}_{j+1}( \bar{B}_{\tau_j} )$.
Therefore, the condition of \eqref{eq:embedding1} in the definition of the \iAY type embedding is not satisfied and hence $\tau_{j+1} = \tau_j$.
Consequently,
\begin{equation}
\begin{split}
&X_{t_j} = X_{t_{j+1}} = \dots = X_{t_n} \quad \text{and} \quad \bar{X}_{t_j} = \bar{X}_{t_{j+1}} = \dots = \bar{X}_{t_n} \\
&\text{on the set $\left\{ X_{t_j} =  \tilde{\xi}_j(\bar{X}_{t_j}), \ \bar{X}_{t_j} \geq X_0 + \epsilon \right\}$}
\end{split}
\label{eq:iAY_embedding_constant}
\end{equation}
for all $j \geq 1$.

%On the set $\{ \bar{B}_{\tau_j} < X_0 + \epsilon \} $ we define
Take $1 \leq j \leq n$.
Denote $\chi := \max \{ k \leq n : \exists t \leq H_{X_0 + \epsilon} \text{ s.t. } B_{t} \leq \tilde{\xi}_{k} (\bar{B}_t ) \} \vee 0$, where $H_x := \inf\{ u>0:B_u=x \}$ and $\mathcal{H} := \{ \chi = j-1, \ H_{X_0 + \epsilon} < \infty \}$. 
By \eqref{eq:relation_zeta_xi_1} we have $\Prob{ \mathcal{H} } > 0$.
Further, by using $\tilde{\zeta}_1(m) \leq \dots \leq \tilde{\zeta}_n(m) < m$ we conclude by the properties of Brownian motion that $\Prob{ \mathcal{H}  \cap \{ \bar{B}_{\tau_j} \in \mathcal{O} \} } > 0$ for $\mathcal{O} \subseteq (X_0 + \epsilon,\infty)$ an open set.
Relabelling and using \eqref{eq:relation_zeta_xi_2} yields
\begin{align}
\Prob{ X_{t_j} =  \tilde{\zeta}_j(\bar{X}_{t_j}),  \bar{X}_{t_j} \in \mathcal{O},  \bar{X}_{t_{j-1}} < X_0 + \epsilon } > 0  \ \ \text{$\forall$ open $\mathcal{O} \subseteq (X_0 + \epsilon,\infty)$.}
\label{eq:pos_prob_error_set}
\end{align}

By $\tbzeta \neq \bzeta$ either Case A or Case B below holds (possibly by changing $\epsilon$ above).
In our arguments we refer to the proof of the pathwise inequality of Proposition \ref{lem:Trajectorial_Inequality_Ordered_Case} given by \citet{Touzi_maxmax} and argue that certain inequalities in this proof become strict.

{\bf Case A}: $\exists \mathcal{O} = (m_1,m_2) \subseteq (X_0 + \epsilon,\infty)$ and $j \leq n$ such that $\tilde{\zeta}_j(m_1) > \zeta_j(m_2)$. 
Take $m > m_2$.
Then on $\left\{ X_{t_j} =  \tilde{\zeta}_j(\bar{X}_{t_j}), \  \bar{X}_{t_j} \in \mathcal{O} \right\}$ we have almost surely
\begin{alignat*}{3}
\Upsilon_n( X, m, \bzeta ) \stackrel[]{ \eqref{eq:iAY_embedding_constant} }{=} \Upsilon_j( X, m, \bzeta ) 
>0 = \indicator{ m \leq \bar{X}_{t_j} } \stackrel[]{ \eqref{eq:iAY_embedding_constant} }{=} \indicator{ m \leq \bar{X}_{t_n} } \stackrel[]{\text{Prop. } \ref{prop:Pathwise Equality} }{=} \Upsilon_n( X, m, \tbzeta )
\end{alignat*}
where the strict inequality holds by noting that $( X_{t_j} - \zeta_j(m) )^+ > 0 $ for all $m \in (m_1,m_2)$ on the above set and then directly verifying that the second inequality of equation (4.3) of \citet{Touzi_maxmax} applied with $\bzeta$ and $X$ is strict. 

{\bf Case B}: $\exists \mathcal{O} = (m_1,m_2) \subseteq (X_0 + \epsilon,\infty)$ and $j \leq n$ such that $\tilde{\zeta}_j(m_2) < \zeta_j(m_1)$.
Take $m \in \mathcal{O}$.
Then on $\left\{ X_{t_j} =  \tilde{\zeta}_j(\bar{X}_{t_j}),  \  \bar{X}_{t_j} \in \mathcal{O} \cap (m,\infty), \ \bar{X}_{t_{j-1}} < X_0 + \epsilon \right\}$ we have almost surely
\begin{align*}
\Upsilon_n( X, m, \bzeta ) \stackrel[]{ \eqref{eq:iAY_embedding_constant} }{=} \Upsilon_j( X, m, \bzeta )
> 1 = \indicator{ m \leq \bar{X}_{t_j} } = \indicator{ m \leq \bar{X}_{t_n} } \stackrel[]{\text{Prop. } \ref{prop:Pathwise Equality} }{=} \Upsilon_n( X, m, \tbzeta )
\end{align*}
where the strict inequality holds by observing that the last inequality in equation (4.3) of \citet{Touzi_maxmax} applied with $\bzeta$ and $X$ is strict because $(X_{j} - \zeta_j(m))^+ = 0 > X_{j} - \zeta_j(m)$ for all $m \in \mathcal{O}$ on the above set.

Combining, in both cases A and B the claim \eqref{eq:pos_hedging_error} follows from \eqref{eq:pos_prob_error_set}.
\end{proof}

\begin{proof}[Proof of Theorem \ref{thm:main_result_2}]
Take $\bzeta \in \mathscr{Z}^{\mathrm{cts}}$ such that strict inequality holds for one submartingale in the definition of $\mathscr{Z}_{-}^{\mathrm{cts}}$, see \eqref{eq:orthant}. We must have $\bzeta \neq \tbzeta$.

As in the proof of Proposition \ref{prop:positive_hedging_error} we choose a $\tbxi$ such that \eqref{eq:relation_zeta_xi_1}--\eqref{eq:relation_zeta_xi_2}, \eqref{eq:relation_zeta_xi} hold and let $X$ be an \iAY type embedding based on this $\tbxi$.
Propositions \ref{lem:Trajectorial_Inequality_Ordered_Case} and \ref{prop:Pathwise Equality} yield
\begin{align*}
\E{ \indic{[m,\infty)}( \bar{X}_{t_n} ) } 
= 
\mathrm{UB} \left(X,\indic{[m,\infty)},\tbzeta \right) 
\leq 
\mathrm{UB}\left(X,\indic{[m,\infty)},\bzeta \right) \qquad \forall m > X_0
\end{align*}
and by Proposition \ref{prop:positive_hedging_error} 
\begin{align*}
\mathrm{UB} \left( X,\indic{[m,\infty)},\tbzeta \right) < \mathrm{UB}\left(X,\indic{[m,\infty)},\bzeta \right)
\end{align*}
for all $m \in \mathcal{O}$ where $\mathcal{O} \subseteq (X_0,\infty)$ is some open set. 
Now the claim follows as in the proof of Theorem \ref{thm:main_result_1}.
\end{proof}

\begin{Remark}
In the setting of Theorem \ref{thm:main_result_2} let $\tbzeta^1, \tbzeta^2 \in \tilde{\mathscr{Z}}, \ \tbzeta^1 \neq \tbzeta^2$, 
and assume that \eqref{eq:integrability} holds for $(\phi,\tbzeta^1)$ and $(\phi,\tbzeta^2)$.
Then there exist martingales $X^{1}$ and $X^{2}$ such that
\begin{align*}
\mathrm{UB}\left( X^1, \phi, \tbzeta^1 \right) &< \mathrm{UB} \left( X^1, \phi, \tbzeta^2 \right), \\
\mathrm{UB}\left( X^2, \phi, \tbzeta^1 \right) &> \mathrm{UB} \left( X^2, \phi, \tbzeta^2 \right).
\end{align*}
This follows by essentially reversing the roles of $\tbzeta^{1}$ and $\tbzeta^{2}$ in the proof of Theorem \ref{thm:main_result_2}.
\end{Remark}

\section{Doob's Inequalities}
\label{sec:doob_section}

In this section we demonstrate how Theorem \ref{thm:main_result_1} can be used to derive Doob's inequalities. 
Further, we investigate in which sense there is an improvement to Doob's inequalities.

Related work on pathwise interpretations of Doob's inequalities can be found in \citet{TrajectorialDoob} and \citet{OblojYor:06}.
\citet[Section 4]{peskir1998} derives Doob's inequalities and shows that the constants he obtains are optimal.
We now give an alternative proof of these statements, and we provide new sharp inequalities for the case $p<1$.

\subsection{Doob's \texorpdfstring{$L^p$}{Lp}-Inequalities, \texorpdfstring{$p>1$}{p>1} }
\label{subsec:L_p}

Using a special case of Theorem \ref{thm:main_result_1} we obtain an improvement to Doob's inequalities.
Denote $\ \mathrm{pow}^p(m) = m^p, \ \ \zeta_{\alpha}(m) := \alpha m$.
\begin{Proposition}[Doob's $L^p$-Inequalities, $p>1$]
\label{prop:doob_Lp}
Let $(X_t)_{t \leq T}$ be a non-negative \cadlag submartingale. 
\begin{itemize}
	\item[{\rm (i)}]  Then,
\begin{subequations}
\begin{align}
\E{ \bar{X}_T^p } 
	& \quad \leq \quad  \mathrm{UB} \left( X, \mathrm{pow}^p, \zeta_{\frac{p-1}{p}} \right) \label{eq:Doob_improvement_1} \\
	&\quad \leq \quad  \left( \frac{p}{p-1} \right)^p \E{ X_T^p } - \frac{p}{p-1} X_0^p.
\label{eq:doob_Lp} 
\end{align}
\end{subequations}
	\item[{\rm (ii)}] For every $\epsilon>0$, there exists a martingale $X$ such that
\begin{align}
0 \leq \left( \frac{p}{p-1} \right)^p   \E{ X_{T}^{p} } - \frac{p}{p-1} X_0^p - \E{ \bar{X}_T^p } < \epsilon.
\label{eq:sharpness_doob_Lp}
\end{align}
	\item[{\rm (iii)}]  The inequality in \eqref{eq:doob_Lp} is strict if and only if either holds:
\begin{subequations}
\begin{align}
&\text{$\E{ \bar{X}_T^p }<\infty$ and $X_T < \frac{p-1}{p} X_0$ with positive probability.}
\label{eq:strict_error_Doob_Lp_1} \\
&\text{$\E{ \bar{X}_T^p }<\infty$ and $X$ is a strict submartingale.}
\label{eq:strict_error_Doob_Lp_2} 
\end{align}
\end{subequations}
\end{itemize}
\end{Proposition}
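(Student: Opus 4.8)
\textbf{Proof strategy for Proposition \ref{prop:doob_Lp}.}

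The plan is to apply Theorem \ref{thm:main_result_1} with $\phi = \mathrm{pow}^p$ and the single-point choice $n=1$, $\bzeta = \zeta_{(p-1)/p}$, i.e.\ $\zeta_1(m) = \frac{p-1}{p}m$. For part (i), inequality \eqref{eq:Doob_improvement_1} is then immediate from Theorem \ref{thm:main_result_1}(i) once we check that the relevant expectations are finite: since $X$ is a non-negative submartingale, $X^p$ may a priori have infinite expectation, but if $\E{X_T^p}=\infty$ the bound \eqref{eq:doob_Lp} is trivially true, so we may assume $\E{X_T^p}<\infty$ and hence $\E{\bar X_T^p}<\infty$ by the classical Doob inequality (or argue directly). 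For the second inequality \eqref{eq:doob_Lp}, I would compute $\mathrm{UB}(X,\mathrm{pow}^p,\zeta_{(p-1)/p})$ explicitly. With $n=1$ the integrand is $\E{\lambda_1^{\bzeta,m}(X_{t_1})} = \E{(X_T - \frac{p-1}{p}m)^+}/(m - \frac{p-1}{p}m) = \frac{p}{m}\E{(X_T-\frac{p-1}{p}m)^+}$, so
\begin{align*}
\mathrm{UB}\left(X,\mathrm{pow}^p,\zeta_{\frac{p-1}{p}}\right) = X_0^p + \int_{(X_0,\infty)} \frac{p}{m}\,\E{\left(X_T - \tfrac{p-1}{p}m\right)^+}\,\dd(m^p).
\end{align*}
Using $\dd(m^p) = p m^{p-1}\dd m$ and Fubini, the inner integral becomes $\E{\int_{X_0}^{\infty} p^2 m^{p-2}(X_T - \frac{p-1}{p}m)^+\,\dd m}$; the indicator $(X_T - \frac{p-1}{p}m)^+>0$ restricts $m < \frac{p}{p-1}X_T$, and one evaluates $\int p^2 m^{p-2}(X_T - \frac{p-1}{p}m)\,\dd m$ over the appropriate range to get a closed form. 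The main computational care is that the lower limit is $X_0$, not $0$: one gets a term $\left(\frac{p}{p-1}\right)^p X_T^p$ from integrating up to $\frac{p}{p-1}X_T$, minus boundary contributions at $m=X_0$ which combine (using $X_0 \le \E{X_T}$ and Jensen/convexity, or a direct pointwise bound) into the $-\frac{p}{p-1}X_0^p$ term and an error that has the correct sign. I expect this boundary bookkeeping to be the fussiest part, but it is routine once set up.

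For part (ii), sharpness, I would exhibit the extremal martingale from Theorem \ref{thm:main_result_1}(ii): take $\bxi = \zeta_1 = \zeta_{(p-1)/p}$, which is non-decreasing and satisfies the integrability condition \eqref{eq:integrability} precisely when $\alpha = \frac{p-1}{p}$ gives $\gamma < \frac{1}{1-\alpha} = p$, so $\mathrm{pow}^p$ just fails the strict inequality $\limsup \phi(m)/m^\gamma = 0$ for $\gamma<p$. This is why one only gets $\epsilon$-optimality rather than exact equality: I would instead use $\zeta_1(m) = \alpha_k m$ with $\alpha_k \uparrow \frac{p-1}{p}$, or truncate, obtaining for each such martingale $X^{(k)}$ equality in \eqref{eq:Doob_improvement_1} and then showing $\mathrm{UB}(X^{(k)},\mathrm{pow}^p,\zeta_{\alpha_k}) \to \left(\frac{p}{p-1}\right)^p\E{X_T^p} - \frac{p}{p-1}X_0^p$ as $\alpha_k \uparrow \frac{p-1}{p}$ along this family; a continuity/dominated-convergence argument in $\alpha$ on the explicit formula for $\mathrm{UB}$ gives \eqref{eq:sharpness_doob_Lp}. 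The main obstacle here is verifying the convergence of the upper bounds and that $X_T^{(k)}$ has the right integrability uniformly enough to pass to the limit; the excursion-theoretic computation of $\Prob{\bar X_T \ge y} = y^{-1/(1-\alpha)}\cdot\mathrm{const}$ from the proof of Theorem \ref{thm:main_result_1} is the key input.

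For part (iii), the equivalence for strictness, I would trace through where the two inequalities \eqref{eq:Doob_improvement_1} and \eqref{eq:doob_Lp} can be strict. Assuming $\E{\bar X_T^p}<\infty$ (otherwise both sides are infinite and there is no strict inequality to speak of, consistent with the stated conditions), the first inequality \eqref{eq:Doob_improvement_1} is an equality iff the pathwise inequality \eqref{eq:thm_Trajectorial_Inequality_Ordered_Case_1} (with $n=1$) holds with equality a.s.\ after taking expectations against $\dd(m^p)$; by the $n=1$ case of Proposition \ref{prop:Pathwise Equality} this is governed by the Az\'ema--Yor structure, and the slack is exactly accounted for by the event $\{X_T < \frac{p-1}{p}X_0\}$ — on $\{X_T < \zeta_1(m)\}$ for $m$ near $X_0$ the term $(X_T-\zeta_1(m))^+$ vanishes while the true indicator contribution need not, producing strict inequality; conversely if $X_T \ge \frac{p-1}{p}X_0$ a.s.\ the $n=1$ embedding reasoning gives equality. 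Separately, the passage from \eqref{eq:Doob_improvement_1} to \eqref{eq:doob_Lp} uses the submartingale property to bound $\E{(X_T - c)^+} \ge \E{(X_{t} - c)^+}$-type quantities and, more to the point, uses $X_0 \le \E{X_T \mid \mathcal F_0}$-style inequalities; this is an equality iff $X$ is a true martingale, giving \eqref{eq:strict_error_Doob_Lp_2}. I would organise (iii) as: strictness holds iff at least one of these two sources of slack is present, which is exactly \eqref{eq:strict_error_Doob_Lp_1} or \eqref{eq:strict_error_Doob_Lp_2}. The main obstacle is making the "slack is exactly this event" claims precise — identifying, in the integrated-against-$\dd(m^p)$ identity, the exact null/non-null conditions — but the $n=1$ specialisation keeps this manageable, and the pathwise equality Proposition \ref{prop:Pathwise Equality} does most of the work.
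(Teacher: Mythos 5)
Your part (i) follows the paper's route: apply Theorem \ref{thm:main_result_1} with $n=1$, expand $\mathrm{UB}(X,\mathrm{pow}^p,\zeta_\alpha)$ via Fubini, evaluate the resulting integral in $m$, and invoke the submartingale inequality $\E{X_T}\geq X_0$ at the last step. That is fine. Your parts (ii) and (iii) each contain a genuine error.

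In (ii) you propose approximating with $\zeta_1(m)=\alpha_k m$ and $\alpha_k\uparrow\frac{p-1}{p}$. This is the wrong direction. The extremal martingale is $X_T=B_{\tau_{\alpha_k}}$ with tail $\Prob{\bar X_T\geq y}=(y/X_0)^{-1/(1-\alpha_k)}$, so $\E{\bar X_T^p}<\infty$ requires $p<\frac{1}{1-\alpha_k}$, i.e.\ $\alpha_k>\frac{p-1}{p}$. For $\alpha_k<\frac{p-1}{p}$ both $\E{\bar X_T^p}$ and $\E{X_T^p}$ are infinite and the difference in \eqref{eq:sharpness_doob_Lp} is ill-defined. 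The paper takes $\alpha=\frac{p+\epsilon-1}{p+\epsilon}>\frac{p-1}{p}$ and lets $\alpha\downarrow\frac{p-1}{p}$; then $\E{\bar X_T^p}=\frac{p+\epsilon}{\epsilon}X_0^p$ is finite, $\E{X_T^p}=\alpha^p\E{\bar X_T^p}$, and a short computation shows the gap in \eqref{eq:sharpness_doob_Lp} tends to zero. Your ``or truncate'' aside could be made to work, but the main suggested construction fails the integrability condition \eqref{eq:integrability} and needs to be reversed.

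In (iii) you are analysing the wrong inequality. Statement (iii) concerns only strictness of \eqref{eq:doob_Lp}, i.e.\ the inequality between $\mathrm{UB}(X,\mathrm{pow}^p,\zeta_{(p-1)/p})$ and the Doob expression $\left(\tfrac{p}{p-1}\right)^p\E{X_T^p}-\tfrac{p}{p-1}X_0^p$; it says nothing about \eqref{eq:Doob_improvement_1}. Both sources of slack in \eqref{eq:doob_Lp} live in the explicit computation of $\mathrm{UB}$: replacing the upper integration limit $\frac{X_T}{\alpha}\vee X_0$ by $\frac{X_T}{\alpha}$ is strict precisely when $\Prob{X_T<\tfrac{p-1}{p}X_0}>0$ (this is \eqref{eq:strict_error_Doob_Lp_1}), and the final application of $\E{X_T}\geq X_0$ is strict precisely when $X$ is a strict submartingale (this is \eqref{eq:strict_error_Doob_Lp_2}). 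You instead assign the event $\{X_T<\tfrac{p-1}{p}X_0\}$ to the slack in \eqref{eq:Doob_improvement_1} ``by the \AY structure'', and assert that $X_T\geq\tfrac{p-1}{p}X_0$ a.s.\ forces equality there. That is false: equality in \eqref{eq:Doob_improvement_1} requires $X$ to be (essentially) an \iAY type embedding, which is a very special law on paths, not merely a support condition on $X_T$. Characterising the slack in \eqref{eq:Doob_improvement_1} is both incorrect as you state it and irrelevant to the claim; the correct argument just tracks the two elementary inequalities inside the chain of computations for $\mathrm{UB}$.
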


\begin{proof}
Let us first prove \eqref{eq:Doob_improvement_1} and \eqref{eq:doob_Lp}.
If $\E{ X_T ^p } = \infty$ there is nothing to show. 
In the other case, equation \eqref{eq:Doob_improvement_1} follows from Theorem \ref{thm:main_result_1} applied with $n=1, \ \phi(y) = \mathrm{pow}^p(y) = y^p$ and $\zeta_1 = \zeta_{\frac{p-1}{p}}$. To justify this choice of $\zeta_1$ and to simplify further the upper bound we start with a more general $\zeta_1 = \zeta_{\alpha}, \ \alpha < 1$ and compute
\begin{alignat}{3}
&&\quad&    \E{ \bar{X}_T^p } - X_0^p 
		  \ \leq \ \mathrm{UB} \left( X, \mathrm{pow}^p, \zeta_{\alpha} \right) - X_0^p \ = \ \E{ \int_{X_0}^{\infty} p y^{p-1} \frac{ ( X_T - \alpha y )^+ }{ y-\alpha y } \dd y } 
\nonumber \\ 
		  = &&& \E{ \int_{X_0}^{\frac{X_T}{\alpha} \vee X_0}  p y^{p-1} \frac{ X_T - \alpha y  }{ y-\alpha y }  \dd y } \quad \leq \quad \E{ \int_{X_0}^{\frac{X_T}{\alpha} }  p y^{p-1} \frac{ X_T - \alpha y  }{ y-\alpha y }  \dd y  } 
\nonumber \\ 		  
		  = &&& \frac{p}{p-1}\frac{1}{1-\alpha} \E{ \left\{ \left( \frac{X_T}{\alpha} \right)^{p-1} - X_0^{p-1} \right\} X_T } - \frac{\alpha}{1-\alpha} \E{ \left( \frac{X_T}{\alpha} \right)^{p}  - X_0^{p} }   
\nonumber \\ 		  
		  \leq &&& 	\frac{1}{p-1}\frac{1}{(1-\alpha)\alpha^{p-1}}	 \E{ X_T^{p}  } - \frac{p-\alpha (p-1)}{(p-1)(1-\alpha)} X_0^p
\label{eq:estimations_Doob_Lp}
\end{alignat}
where we used Fubini in the first equality and the submartingale property of $X$ in the last inequality.
We note that the function $\alpha \mapsto \frac{1}{(1-\alpha) \alpha^{p-1}}$ attains its minimum at $\alpha^{\star} = \frac{p-1}{p}$. Plugging $\alpha = \alpha^{\star}$ into the above yields \eqref{eq:doob_Lp}.

We turn to the proof that Doob's $L^p$-inequality is attained asymptotically in the sense of \eqref{eq:sharpness_doob_Lp}, a fact which was also proven by \citet[Section 4]{peskir1998}. 
Let $X_0 > 0$, otherwise the claim is trivial.
Set $\alpha^{\star}=\frac{p-1}{p}$ and take $\alpha^{\star}< \alpha := \frac{p+\epsilon- 1}{p+\epsilon} < 1$.
Let $X_T = B_{\tau_{\alpha}}$ where $B$ is a Brownian motion stared at $X_0$ and  $\tau_{\alpha} := \inf\{ u>0:B_u \leq \alpha \bar{B}_u \}$.
Then by using excursion theoretical results, cf. e.g. \citet{rogers89g},
\begin{align*}
\Prob{ \bar{X}_T \geq y } = \exp\left( - \int_{X_0}^{y} \frac{1}{z-\alpha z} \dd z \right) = \left( \frac{y}{X_0} \right)^{ -\frac{1}{1-\alpha} }
\end{align*}
and then direct computation shows
\begin{align*}
\E{\bar{X}_T^p} = \frac{p+\epsilon}{\epsilon}X_0^p.
\end{align*}
By Doob's $L^p$-inequality,
\begin{align*}
\E{ \bar{X}_T^p } \leq 
\left( \frac{p}{p-1}\right)^p \E{X_T^p} - \frac{p}{p-1}X_0^p 
= & \left(\frac{\alpha}{\alpha^{\star}}\right)^p \E{ \bar{X}_T^p } - \frac{p}{p-1} X_0^p
\end{align*}
and one verifies
\begin{align*}
\left\{ \left(\frac{p}{p-1}\right)^p \cdot \left[ \frac{p+\epsilon - 1}{p+\epsilon} \right]^p - 1 \right\} \cdot \frac{p+\epsilon}{\epsilon}X_0^p \quad \xrightarrow[\quad\epsilon \downarrow 0\quad]{} \quad \frac{p}{p-1}X_0^p.
\end{align*}
This establishes the claim in \eqref{eq:sharpness_doob_Lp}.

Finally, we note that in the calculations \eqref{eq:estimations_Doob_Lp} which led to \eqref{eq:doob_Lp} there are three inequalities:
the first one comes from Theorem \ref{thm:main_result_1} and does not concern the claim regarding \eqref{eq:strict_error_Doob_Lp_1}--\eqref{eq:strict_error_Doob_Lp_2}.
The second one is clearly strict if and only if \eqref{eq:strict_error_Doob_Lp_1} holds.
The third one is clearly strict if and only if \eqref{eq:strict_error_Doob_Lp_2} holds.
\end{proof}

\begin{Remark}[Asymptotic Attainability]
For the martingales in {\rm (ii)} of Proposition \ref{prop:doob_Lp} we have
\begin{align*}
\mathrm{UB} \left( X, \mathrm{pow}^p, \zeta_{\frac{p-1}{p}} \right) 
= \left( \frac{p}{p-1} \right)^p \E{ X_T^p } - \frac{p}{p-1} X_0^p
\end{align*}
and $\E{ X_T^p } \to \infty$ as $\epsilon \to 0$.
\end{Remark}

\subsection{Doob's \texorpdfstring{$L^1$}{L1}-Inequality}
\label{subsec:L_1}

Using a special case of Theorem \ref{thm:main_result_1} we focus on Doob's $L\log L$ type inequalities. We recover here the classical constant $e/(e-1)$, see \eqref{eq:doob_L1} , with a refined structure on the inequality. A further improvement to the constant will be obtain in subsequent section in Corollary \ref{cor:NewL1}. Denote $\mathrm{id}(m)=m$, and 
\begin{align}
\underline{\zeta}_{\alpha}(m) := 
\begin{cases} 
	-\infty &\mbox{if } m<1, \\
	\alpha m & \mbox{if } m \geq 1. 
\end{cases}
\end{align}

\begin{Proposition}[Doob's $L^1$-Inequality]
\label{prop:doob_L1}
Let $(X_t)_{t \leq T}$ be a non-negative \cadlag submartingale. Then:
\begin{itemize}
	\item[{\rm (i)}] with $0 \log(0):= 0$ and $V(x):=x-x\log(x)$,
\begin{subequations}
\begin{align}
\E{ \bar{X}_T } 
	& \quad \leq \quad \mathrm{UB}\left( X, \mathrm{id}, \underline{\zeta}_{\frac{1}{e}} \right) 
\label{eq:Doob_improvement_2} \\
	& \quad \leq \quad \frac{e}{e-1} \Big( \E{ X_T \log\left( X_T \right) } + V(1\vee X_0) \Big).
\label{eq:doob_L1} 
\end{align}
\end{subequations}
	\item[{\rm (ii)}] in the case $X_0 \geq 1$ there exists a martingale which achieves equality in both, \eqref{eq:Doob_improvement_2} and \eqref{eq:doob_L1} and in the case $X_0 < 1$ there exists a submartingale which achieves equality in both, \eqref{eq:Doob_improvement_2} and \eqref{eq:doob_L1}.
	\item[{\rm (iii)}] the inequality in \eqref{eq:doob_L1} is strict if and only if either holds:
\begin{subequations}
\begin{align}
&\text{$\E{ \bar{X}_T }<\infty$ and $\bar{X}_T \geq 1, \ \ X_T < \frac{1}{e} X_0$ with positive probability,}
\label{eq:strict_error_Doob_L1_1}  \\	
&\text{$\E{ \bar{X}_T }<\infty$ and $\bar{X}_T \geq 1, \ \ \E{ X_T } > X_0 \vee 1$.}
\label{eq:strict_error_Doob_L1_2}  \\	
&\text{$\E{ \bar{X}_T }<\infty$ and $\bar{X}_T<1$ with positive probability.} 
\label{eq:strict_error_Doob_L1_3} 
\end{align}
\end{subequations}
\end{itemize}
\end{Proposition}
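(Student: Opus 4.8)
The plan is to mirror the structure of the proof of Proposition \ref{prop:doob_Lp}, applying Theorem \ref{thm:main_result_1} with $n=1$, $\phi = \mathrm{id}$ and $\zeta_1 = \underline{\zeta}_{1/e}$. Inequality \eqref{eq:Doob_improvement_2} is then immediate from part (i) of Theorem \ref{thm:main_result_1}; the work is in establishing the second inequality \eqref{eq:doob_L1} by an explicit computation of $\mathrm{UB}(X,\mathrm{id},\underline{\zeta}_\alpha)$ for general $\alpha<1$, followed by optimization over $\alpha$. Concretely, I would write
\begin{align*}
\mathrm{UB}\left(X,\mathrm{id},\underline{\zeta}_\alpha\right) - X_0
= \E{ \int_{(X_0,\infty)} \lambda^{\underline{\zeta}_\alpha,m}_1(X_T)\,\dd m }
= \E{ \int_{X_0\vee 1}^{X_T/\alpha\,\vee\,(X_0\vee 1)} \frac{X_T-\alpha m}{m-\alpha m}\,\dd m },
\end{align*}
being careful that on $\{m<1\}$ the term $\zeta_1(m)=-\infty$ makes $\lambda^{\underline{\zeta}_\alpha,m}_1(x) = (x-(-\infty))^+/(m-(-\infty)) = 1$ (interpreting the ratio as its limit), so that the contribution over $(X_0,1)$ is just $\indic{1>X_0}(1-X_0)$ and accounts for the $V(1\vee X_0)$ correction term once combined with the leading $X_0$. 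After performing the $m$-integral explicitly (a logarithmic integral, as in \eqref{eq:estimations_Doob_Lp}), one obtains an expression affine in $\E{X_T\log X_T}$, $\E{X_T}$ and the boundary terms; then bound $\E{X_T}$ from above via the submartingale property and, for $X_0<1$, bound $X_T/\alpha \vee (X_0\vee 1) \leq X_T/\alpha$ as in the $L^p$ case. Finally, I would observe that the resulting coefficient of $\E{X_T\log X_T}$, namely something like $\alpha \mapsto \frac{-1}{\alpha\log\alpha}$, is minimized at $\alpha^\star = 1/e$, and plug in to get the constant $e/(e-1)$ and the form $V(1\vee X_0)$ in \eqref{eq:doob_L1}.

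For part (ii), the attaining (sub)martingale should be the Azéma–Yor type embedding associated with the optimal boundary, exactly as in the proof of Theorem \ref{thm:main_result_1}(ii): take $X_T = B_{\tau}$ with $\tau = \inf\{u>0 : B_u \leq \underline{\zeta}_{1/e}(\bar B_u)\}$, time-changed to run on $[0,T]$. When $X_0\geq 1$ the boundary $\underline{\zeta}_{1/e}$ is just $m/e$ throughout the relevant range, so this is a genuine martingale and Proposition \ref{prop:Pathwise Equality} gives pathwise equality in \eqref{eq:thm_Trajectorial_Inequality_Ordered_Case_1}, hence equality in \eqref{eq:Doob_improvement_2}; one then checks that the inequalities used to pass from \eqref{eq:Doob_improvement_2} to \eqref{eq:doob_L1} (the bound $X_T/\alpha^\star \vee X_0 \to X_T/\alpha^\star$ and the submartingale bound on $\E{X_T}$) are in fact equalities here, since $X_T$ is a martingale with $X_T \geq X_0 \geq 1 > \alpha^\star X_0$ so that $X_T/\alpha^\star > X_0$ a.s. When $X_0 < 1$, the Brownian path must first be allowed to reach level $1$ (the boundary is $-\infty$ below $1$, i.e.\ no stopping), so the construction should start the embedding only after $H_1 = \inf\{u : B_u = 1\}$; since $H_1 < \infty$ a.s.\ but $B$ started at $X_0<1$ is only a local martingale up to $H_1$ in the relevant uniformly integrable sense, the stopped process is a submartingale rather than a martingale — this is why the statement only claims a submartingale in that case. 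I would verify uniform integrability and the embedding property by the same argument as in \cite[Prop.~3.5]{OblSp13}, cited already in the proof of Theorem \ref{thm:main_result_1}.

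For part (iii), I would trace through the three inequalities encountered in deriving \eqref{eq:doob_L1}, in parallel with the last paragraph of the proof of Proposition \ref{prop:doob_Lp}. The first is the bound from Theorem \ref{thm:main_result_1}, which does not enter the characterization (it is an equality for the embedding and is finite exactly when $\E{\bar X_T}<\infty$, explaining the common hypothesis in \eqref{eq:strict_error_Doob_L1_1}–\eqref{eq:strict_error_Doob_L1_3}). The second inequality is the replacement of the integration upper limit $X_T/\alpha^\star \vee (X_0\vee 1)$ by $X_T/\alpha^\star$, which is strict precisely when $X_T/\alpha^\star < X_0 \vee 1$ with positive probability, i.e.\ when $X_T < \frac1e X_0$ (on $\{\bar X_T \geq 1\}$) — this is \eqref{eq:strict_error_Doob_L1_1}; one must also separately account for the event $\{\bar X_T < 1\}$, on which the upper bound still charges the full interval $(X_0,1)$ while $\bar X_T$ contributes less, giving the strict case \eqref{eq:strict_error_Doob_L1_3}. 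The third is the submartingale bound $\E{X_T} \geq X_0 \vee 1$ — wait, more precisely $\E{X_T}\ge X_0$ with the coefficient's sign making it strict exactly when $\E{X_T} > X_0\vee 1$ on $\{\bar X_T\ge 1\}$, which is \eqref{eq:strict_error_Doob_L1_2}. The main obstacle I anticipate is bookkeeping around the kink of $\underline{\zeta}_\alpha$ at $m=1$ and the $X_0<1$ versus $X_0 \geq 1$ dichotomy: getting the correction term $V(1\vee X_0)$ to come out exactly right, and correctly separating the $\{\bar X_T<1\}$ contribution to produce the third strictness criterion \eqref{eq:strict_error_Doob_L1_3}, requires care but is otherwise routine.
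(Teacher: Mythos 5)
Your overall plan for part (i) — apply Theorem~\ref{thm:main_result_1} with $n=1$, $\phi=\mathrm{id}$, $\zeta_1=\underline{\zeta}_{1/e}$, compute $\mathrm{UB}(X,\mathrm{id},\underline{\zeta}_\alpha)$ for general $\alpha$, then specialise — is the paper's strategy, and your reading of $\lambda^{\underline{\zeta}_\alpha,m}_1$ as $\equiv 1$ on $\{m<1\}$ is correct. However, the mechanism you propose for selecting $\alpha^\star=1/e$ is wrong. After Fubini and the enlargement of the $m$-integral, the coefficient multiplying $\E{X_T\log X_T}$ is $\frac{1}{1-\alpha}$, not $-1/(\alpha\log\alpha)$, and $\frac{1}{1-\alpha}$ is minimised at $\alpha=0$, not $\alpha=1/e$. (Also, $\min_\alpha\{-1/(\alpha\log\alpha)\}=e$, not $e/(e-1)$, so even your proposed optimisation would not yield the advertised constant.) What actually singles out $\alpha=1/e$ is a cancellation: the term proportional to $\E{X_T}$ carries coefficient $-\frac{\log\alpha+1+\log(X_0\vee 1)}{1-\alpha}$, and the piece $\log\alpha+1$ vanishes precisely at $\alpha=1/e$, leaving $-\frac{\log(X_0\vee1)}{1-\alpha}\E{X_T}\le -\frac{\log(X_0\vee1)}{1-\alpha}(X_0\vee 1)$ by the submartingale property. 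Without recognising this constraint on the sign of the $\E{X_T}$-coefficient you cannot discard the $\E{X_T}$ term, and the step does not produce \eqref{eq:doob_L1} as stated.

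The more serious gap is in part (ii) for $X_0<1$. You propose starting Brownian motion at $X_0$ and letting it run up to $H_1=\inf\{u:B_u=1\}$ before initiating the Az\'ema--Yor-type embedding, and you assert that the resulting stopped process is a submartingale. It is not: $(B_{t\wedge H_1})$ started at $X_0<1$ takes negative values with positive probability, so it is not a non-negative process, and since it is a local martingale bounded above by $1$ it is in fact a (strict) \emph{super}martingale on $[0,H_1]$. The paper's construction avoids this by a different device: hold the process constant at $X_0$ on $[0,T/2)$ and then jump \emph{deterministically} up to level $1$ at time $T/2$ (a non-decreasing deterministic move, hence a submartingale move), and only then run the time-changed Brownian motion $B_{\,\cdot\,\wedge\tau_{1/e}}$ started at $1$. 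Your construction reproduces the correct joint law of $(\bar X_T, X_T)$, so equality in \eqref{eq:Doob_improvement_2}--\eqref{eq:doob_L1} would hold, but the process you build is not a non-negative submartingale and so cannot serve as the claimed attaining example. Part (iii) of your sketch is essentially aligned with the paper, though note that on the event $\{\bar X_T\geq1\}$ the relevant threshold in the second inequality is $X_T<\frac{1}{e}(X_0\vee 1)$, matching the use of $X_0\vee 1$ as the lower limit of integration.
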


\begin{proof}
Let us first prove \eqref{eq:Doob_improvement_2} and \eqref{eq:doob_L1}.
If $\E{ \bar{X}_T } = \infty$ there is nothing to show. 
In the other case, equation \eqref{eq:Doob_improvement_2} follows from Theorem \ref{thm:main_result_1} applied with $n=1, \ \phi(y) = \mathrm{id}(y)=y$ and $\zeta_1 = \underline{\zeta}_{\frac{1}{e}}$.

In the case $X_0 \geq 1$ we further compute using $\zeta_1 = \underline{\zeta}_{\alpha}, \ \alpha < 1$,
\begin{alignat}{3}
	 &&\quad& \E{ \bar{X}_T } - X_0 \quad \leq \quad \mathrm{UB}\left( X, \mathrm{id}, \underline{\zeta}_{\alpha} \right) - X_0  \nonumber \\
= &&& \int_{X_0}^{ \frac{X_T}{\alpha} \vee X_0 } \frac{\E{  X_T -\alpha y  }}{ y- \alpha y } \dd y \quad  \leq  \quad  \E{ \int_{X_0}^{ \frac{X_T}{\alpha} }  \frac{ X_T - \alpha y }{ (1-\alpha) y } \dd y } \nonumber \\
= &&& \frac{\alpha}{1-\alpha}  \E{ \frac{X_T}{\alpha} \left\{ \log\left(\frac{X_T}{\alpha} \right) - \log(X_0) \right\} } - \frac{\alpha}{1-\alpha}   \E{ \frac{X_T}{\alpha} - X_0 }
\nonumber \\ 
 \stackrel{\mathclap{\alpha = 1/e}}{=} &&& \frac{e}{e-1} \E{ X_T \log\left(X_T \right) } - \frac{e}{e-1} \E{ X_T } \log(X_0) + \frac{1}{e-1}X_0   
%\label{eq:last_valid_estimate_L1} \\
\nonumber \\
 \leq &&& \frac{e}{e-1} \E{ (X_T) \log\left(X_T \right) } - \frac{e}{e-1} X_0 \log(X_0)  + \frac{1}{e-1}X_0.
\label{eq:calculations_Doob_L1_X_0_geq_1}
\end{alignat}
where the choice $\alpha = \frac{1}{e}$ gives a convenient cancellation and we used again that $X$ is a submartingale.
This is \eqref{eq:doob_L1} in the case $X_0 \geq 1$.

For the case $0<X_0<1$ we obtain from Proposition \ref{lem:Trajectorial_Inequality_Ordered_Case} for $n=1$, 
\begin{align*}
\Prob{ \bar{X}_T \geq y } \ \leq \ \inf_{\zeta<y} \frac{\E{ ( X_T - \zeta )^+ }}{ y-\zeta } \leq \frac{\E{ ( X_T - \alpha y )^+ }}{ y-\alpha y }
\end{align*}
for $\alpha < 1$ and therefore
\begin{equation}
\begin{split}
\E{ \bar{X}_T } - X_0 
\ = \ & \int_{X_0}^{\infty} \Prob{ \bar{X}_T \geq y } \dd y  
\ \leq \ (1-X_0) + \int_1^{\infty} \Prob{ \bar{X}_T \geq y } \dd y 
\\
\leq \ &(1-X_0) + \frac{e}{e-1} \E{ (X_T) \log\left(X_T \right) } + \frac{1}{e-1}
\end{split}
\label{eq:calculations_Doob_L1_X_0_leq_1}
\end{equation}
by \eqref{eq:calculations_Doob_L1_X_0_geq_1}. This is \eqref{eq:doob_L1} in the case $X_0 < 1$.

Now we prove that Doob's $L^1$-inequality is attained.
This was also proven by \citet[Section 4]{peskir1998}. 
Firstly, let $X_0 \geq 1$.
Then the martingale 
\begin{align}
X = \left( B_{ \frac{t}{T-t} \wedge \tau_{\frac{1}{e}} }\right)_{t \leq T },\quad \textrm{where }\tau_{\frac{1}{e}} = \inf\{t: e B_t\leq \overline B_t\},
\end{align}
and $B$ is a Brownian motion with $B_0 = X_0$,
achieves equality in both \eqref{eq:Doob_improvement_2} and \eqref{eq:doob_L1}.
Secondly, let $X_0 < 1$.
Then the submartingale $X$ defined by
\begin{align}
\begin{cases} 
	X_0 &\mbox{if } t < T/2, \\
	B_{ \frac{t-T/2}{T/2-(t-T/2)} \wedge \tau_{\frac{1}{e}} } & \mbox{if } t \geq T/2, 
\end{cases}
\end{align}
where $B$ is a Brownian motion, $B_0 = 1$,
achieves equality in both, \eqref{eq:Doob_improvement_2} and \eqref{eq:doob_L1}.
 
Finally, we note that in the calculations \eqref{eq:calculations_Doob_L1_X_0_geq_1} which led to \eqref{eq:doob_Lp} there are three inequalities:
the first one comes from Theorem \ref{thm:main_result_1} and does not concern the claim regarding \eqref{eq:strict_error_Doob_L1_1}--\eqref{eq:strict_error_Doob_L1_3}.
The second one is clearly strict if and only if \eqref{eq:strict_error_Doob_L1_1} holds.
The third one is clearly strict if and only if \eqref{eq:strict_error_Doob_L1_2} holds.
In addition, in the case $X_0<1$ there is an additional error coming from \eqref{eq:calculations_Doob_L1_X_0_leq_1}.
Note that 
\begin{align*}
\restr{ \frac{ \E{ \left( X_T - \zeta \right)^+ } }{ y-\zeta } }{\zeta = \infty } := \lim_{\zeta \to -\infty} \frac{ \E{ \left( X_T - \zeta \right)^+ } }{ y-\zeta } = 1
\end{align*}
in the case when $\E{ \bar{X}_T } < \infty$.
Hence, the first inequality in \eqref{eq:calculations_Doob_L1_X_0_leq_1} is strict if and only if \eqref{eq:strict_error_Doob_L1_3} holds. 
The second inequality in \eqref{eq:calculations_Doob_L1_X_0_leq_1} is strict if and only if \eqref{eq:strict_error_Doob_L1_1} or \eqref{eq:strict_error_Doob_L1_2} holds.
\end{proof}

\subsection{Doob Type Inequalities, \texorpdfstring{$0 < p<1$}{0<p<1}}

It is well known that if $X$ is a positive continuous local martingale converging a.s. to zero, then
\begin{align}
\bar{X}_{\infty} \sim \frac{X_0}{U}
\end{align}
where $U$ is a uniform random variable on $[0,1]$.
Further, if $X$ does not converge to zero but to a non-negative limit $X_{\infty}$, we can, possibly on an enlarged probability space, extend it to a positive continuous local martingale $Y$ converging a.s. to zero and clearly $\bar{X}_{\infty} \leq \bar{Y}_{\infty}$.

Hence, for any positive continuous local martingale $X$,
\begin{align}
\E{ \bar{X}^p_{T} } \leq \E{ \left( \frac{X_0}{U} \right)^p } = \int_0^1 \left( \frac{X_0}{u} \right)^p \dd u = \frac{X_0^p}{1-p}
\label{eq:bound_doob_type_classical_theory}
\end{align}
and \eqref{eq:bound_doob_type_classical_theory} is attained.
We now generalize \eqref{eq:bound_doob_type_classical_theory} to a non-negative submartingale.

\begin{Proposition}[Doob Type Inequalities, $0 < p<1$]
\label{prop:Doobsmallp}
Let $X$ be a non-negative \cadlag submartingale, $X_0>0$, and $p\in (0,1)$. Denote $m_r:=X_0^{-r}\E{X_T^r}$ for $r\le 1$. Then: 
\begin{itemize}
	\item[{\rm (i)}]  there is a unique $\hat{\alpha}\in(0,1]$ which solves
	\begin{align}
	m_p\hat\alpha^{-p}=\frac{1-p+pm_1}{1-p+p\hat\alpha}
	\label{eq:alpha_hat}
\end{align}	 
and for which we have
\begin{subequations}
\begin{align}
\E{ \bar{X}_T^p } 
\;&\le\; 
X_0^p m_p \hat{\alpha}^{-p}  = \frac{X_0^p}{1-p+p\hat\alpha} + X_0^{p-1} \frac{p}{1-p+p\hat\alpha} \Big( \E{X_T} - X_0 \Big)
\label{eq:Doob_type_inequality_a} \\
\;&<\;
\frac{X_0^p}{1-p} + X_0^{p-1} \frac{p}{1-p} \Big( \E{X_T} - X_0 \Big).
%\left\{ 1 + m_1 - \frac{1}{ \left( \frac{1-m_p}{1+p m_1} \right)^{1/p} } \right\} \leq \frac{1}{1-p}.
\label{eq:Doob_type_inequality_b}
\end{align}
\end{subequations}
	\item[{\rm (ii)}] there exists a martingale which attains equality in  \eqref{eq:Doob_type_inequality_a}. Further, for every $\epsilon > 0$ there exists a martingale such that
	\begin{align}\label{eq:Doob_psmall_epsilon}
	0 \leq \frac{X_0^p}{1-p} + X_0^{p-1} \frac{p}{1-p} \Big( \E{X_T} - X_0 \Big) - \E{ \bar{X}_T^p } < \epsilon.
	\end{align}
\end{itemize}
\end{Proposition}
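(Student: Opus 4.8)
The plan is to apply Theorem \ref{thm:main_result_1} with $n=1$, $\phi(m)=\mathrm{pow}^p(m)=m^p$ and $\zeta_1=\zeta_\alpha$ for a free parameter $\alpha\in(0,1]$, and then optimize over $\alpha$. Exactly as in the computation \eqref{eq:estimations_Doob_Lp} in the proof of Proposition \ref{prop:doob_Lp}, starting from $\E{\bar X_T^p}\le \mathrm{UB}(X,\mathrm{pow}^p,\zeta_\alpha)$ one uses Fubini to write the right-hand side as $X_0^p+\E{\int_{X_0}^{X_T/\alpha\vee X_0} p y^{p-1}\frac{X_T-\alpha y}{(1-\alpha)y}\,\dd y}$, and then evaluates the inner integral in closed form. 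The key difference from the $p>1$ case is that here $p-1<0$, so $\int y^{p-2}\dd y$ and $\int y^{p-1}\dd y$ are still elementary but one must be careful that $\frac{X_T}{\alpha}\ge X_0$ may fail; this is handled by the $\vee X_0$ in the upper limit, which makes the integrand vanish when $X_T<\alpha X_0$, so one can simply cap the region and the bound still holds. Carrying out the integration one obtains, after using the submartingale property $\E{X_T}\ge X_0$ in the appropriate place, a bound of the form
\begin{align*}
\E{\bar X_T^p}\le \frac{X_0^p}{1-p+p\alpha}+X_0^{p-1}\frac{p}{1-p+p\alpha}\big(\E{X_T}-X_0\big)=:g(\alpha),
\end{align*}
valid for every $\alpha\in(0,1]$; rewriting $\E{X_T}=X_0 m_1$ shows $g(\alpha)=X_0^p\cdot\frac{1-p+pm_1}{1-p+p\alpha}$.

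Next I would optimize $g$ over $\alpha\in(0,1]$. Since $g$ is decreasing in $\alpha$ on its own, one does \emph{not} simply push $\alpha$ to $1$: the point is that the \emph{first} inequality $\E{\bar X_T^p}\le\mathrm{UB}(X,\mathrm{pow}^p,\zeta_\alpha)$ should be made as tight as possible, and the genuinely optimal choice balances the contribution of the term $\E{(X_T-\alpha y)^+}$ against $\E{X_T^p}$. Concretely, before throwing away information via the submartingale step, the sharp bound is
\begin{align*}
\E{\bar X_T^p}\le X_0^p+\frac{p}{1-\alpha}\Big(\frac{1}{1-p}\alpha^{-p}\big(\E{X_T^p}-X_0^p\big)\cdot\tfrac{?}{}\ldots\Big),
\end{align*}
so I would instead keep the bound in terms of $m_p=X_0^{-p}\E{X_T^p}$ and $m_1$, getting something of the shape $\E{\bar X_T^p}\le X_0^p h(\alpha)$ with $h(\alpha)$ a smooth function of $\alpha$ on $(0,1]$ involving both $m_p$ and $m_1$; differentiating $h$ and setting $h'(\hat\alpha)=0$ yields precisely the equation \eqref{eq:alpha_hat}, namely $m_p\hat\alpha^{-p}=\frac{1-p+pm_1}{1-p+p\hat\alpha}$. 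Uniqueness of $\hat\alpha\in(0,1]$ follows because the left side $\alpha\mapsto m_p\alpha^{-p}$ is strictly decreasing from $+\infty$ to $m_p$ while the right side $\alpha\mapsto\frac{1-p+pm_1}{1-p+p\alpha}$ is strictly decreasing from $\frac{1-p+pm_1}{1-p}$ to $\frac{1-p+pm_1}{1}$, and a sign check at $\alpha=1$ (using $m_p\le m_1\le$ whatever Jensen gives, together with $m_1\ge1$) shows the curves cross exactly once in $(0,1]$; at $\hat\alpha$ the two equivalent expressions in \eqref{eq:Doob_type_inequality_a} coincide, which is just a matter of substituting \eqref{eq:alpha_hat} back in. The strict inequality \eqref{eq:Doob_type_inequality_b} is then immediate: $g$ is strictly decreasing in $\alpha$ and $\hat\alpha\le1$ with equality only in degenerate cases, and $g(1)=\frac{X_0^p}{1-p}+X_0^{p-1}\frac{p}{1-p}(\E{X_T}-X_0)$; one must check $\hat\alpha<1$ strictly, which holds because the defining equation at $\alpha=1$ reads $m_p=m_1$, forcing $X_T$ a.s.\ constant, and even then the supremum is not attained at an interior optimum — I would phrase \eqref{eq:Doob_type_inequality_b} as a strict bound by noting $1-p+p\hat\alpha<1$ unless $m_p=m_1=1$, and in that last case $X\equiv X_0$ and the left side is $X_0^p<\frac{X_0^p}{1-p}$ anyway.

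For part (ii), attainment in \eqref{eq:Doob_type_inequality_a}: I would take $X_T=B_{\tau_{\hat\alpha}}$ with $B$ a Brownian motion started at $X_0$ and $\tau_{\hat\alpha}=\inf\{u>0:B_u\le\hat\alpha\bar B_u\}$, embedded in continuous time via $X_t=B_{\frac{t}{T-t}\wedge\tau_{\hat\alpha}}$ — this is an \iAY type embedding with constant boundary $\xi_1(m)=\hat\alpha m$, so by Proposition \ref{prop:Pathwise Equality} it achieves equality in \eqref{eq:thm_Trajectorial_Inequality_Ordered_Case_1} and hence in \eqref{eq:martingale_inequality_general} with $\bzeta=\zeta_{\hat\alpha}$; the only thing to verify is that the excursion-theoretic law $\Prob{\bar X_T\ge y}=(y/X_0)^{-1/(1-\hat\alpha)}$ makes $\E{\bar X_T^p}$ finite, which needs $p<1/(1-\hat\alpha)$, i.e.\ condition \eqref{eq:integrability} with $\gamma=p$; since $\hat\alpha>0$ this is automatic for $p<1$, but I should record the edge behaviour and note that $X$ is a genuine UI martingale so $\E{X_T}=X_0$, $m_1=1$, and then \eqref{eq:alpha_hat} pins down $\hat\alpha$ consistently. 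The asymptotic sharpness \eqref{eq:Doob_psmall_epsilon} is proven exactly as in Proposition \ref{prop:doob_Lp}(ii): fix $\alpha$ slightly below $1$, use the same $\tau_\alpha$-embedding martingale, compute $\E{\bar X_T^p}=\frac{X_0^p}{1-p(1-\alpha)\cdot\text{something}}$ in closed form via the Pareto law $(y/X_0)^{-1/(1-\alpha)}$ — explicitly $\E{\bar X_T^p}=\frac{X_0^p}{1-p(1-\alpha)}$ when $p(1-\alpha)<1$, wait one needs $\frac{1}{1-\alpha}>p$ — and check that as $\alpha\uparrow1$ the gap $\frac{X_0^p}{1-p}-\E{\bar X_T^p}\downarrow0$ while $\E{X_T}=X_0$ so the second term on the right of \eqref{eq:Doob_type_inequality_b} vanishes; a one-line limit computation closes it.

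\textbf{Main obstacle.} The routine part is the Fubini integration and the asymptotics; the delicate part is item (i): verifying that the optimization over $\alpha\in(0,1]$ genuinely produces the \emph{fixed-point} equation \eqref{eq:alpha_hat} rather than a boundary optimum, and that this fixed point is unique. One must track carefully where the submartingale inequality $\E{X_T}\ge X_0$ is applied versus where $\E{X_T^p}$ (equivalently $m_p$) enters, because these pull $\alpha$ in opposite directions — keeping $m_p$ and $m_1$ as separate data is what makes the stationarity condition nontrivial, and getting the bookkeeping right so that the two displayed forms in \eqref{eq:Doob_type_inequality_a} agree at $\alpha=\hat\alpha$ is the crux.
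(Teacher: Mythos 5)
Your route is the same as the paper's: apply Theorem \ref{thm:main_result_1} with $n=1$, $\phi=\mathrm{pow}^p$, $\zeta_1=\zeta_\alpha$, keep the bound in terms of both $m_p$ and $m_1$ rather than discarding $m_p$, and optimize over $\alpha$ to obtain the fixed-point equation \eqref{eq:alpha_hat}. You correctly identify that the crux is the stationarity condition rather than a boundary optimum. However, there are two genuine errors in the details. First, the boundary/limit direction is reversed throughout. With $g(\alpha)=\frac{X_0^p}{1-p+p\alpha}+X_0^{p-1}\frac{p}{1-p+p\alpha}(\E{X_T}-X_0)$, you have $g(0)=\frac{X_0^p}{1-p}+X_0^{p-1}\frac{p}{1-p}(\E{X_T}-X_0)$ (which you mislabel as $g(1)$) while $g(1)=X_0^p+pX_0^{p-1}(\E{X_T}-X_0)$. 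The weak bound \eqref{eq:Doob_type_inequality_b} is $g(0)$, and its strictness comes from $\hat\alpha>0$, not $\hat\alpha<1$: since $g$ is strictly decreasing, $g(\hat\alpha)<g(0)$. As written your argument ($g$ decreasing, $\hat\alpha\le 1$) would yield $g(\hat\alpha)\ge g(1)$, the wrong direction. This error propagates to part (ii): for asymptotic attainment of \eqref{eq:Doob_psmall_epsilon} you must take $\alpha\downarrow 0$, not $\alpha\uparrow 1$, since $\E{\bar B_{\tau_\alpha}^p}=\frac{X_0^p}{1-p+p\alpha}\to\frac{X_0^p}{1-p}$ as $\alpha\downarrow 0$ but $\to X_0^p$ as $\alpha\uparrow 1$.

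Second, the uniqueness argument is incomplete. That both sides of \eqref{eq:alpha_hat} are strictly decreasing in $\alpha$ does not imply a unique crossing — two decreasing continuous functions can cross any number of times. What is needed is that $h(\alpha):=1-p+pm_1-(1-p+p\alpha)m_p\alpha^{-p}$ is strictly increasing on $(0,1]$, which follows because $\frac{\dd}{\dd\alpha}\bigl[(1-p+p\alpha)\alpha^{-p}\bigr]=p(1-p)\alpha^{-p-1}(\alpha-1)<0$. Combined with $h(0^+)=-\infty$ and $h(1)=1-p+pm_1-m_p\ge 0$ (from $m_p\le m_1^p$, $m_1\ge 1$, and $1-p+px-x^p\ge 0$ for $x\ge 1$), this yields existence and uniqueness of $\hat\alpha\in(0,1]$; your sketch has the right sign check at $\alpha=1$ but does not close the monotonicity step. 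A more minor issue: the intermediate claim that the bound $\E{\bar X_T^p}\le g(\alpha)$ holds for all $\alpha$ "after using the submartingale property" is not correct — the valid bound from the integration is $\E{\bar X_T^p}\le X_0^p f(\alpha)$, and $X_0^p f(\alpha)=g(\alpha)$ only at $\alpha=\hat\alpha$ (and, coincidentally, at $\alpha=0$). You self-correct in the next paragraph, but this slip appears to be the source of the $\alpha=0$ versus $\alpha=1$ confusion above.
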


\begin{proof}
Following the calculations in \eqref{eq:estimations_Doob_Lp}, we see that
 \begin{eqnarray*}
 \E{\bar X_T^p}
 \leq
\frac{1}{1-\alpha} X_0^p
 +\frac{1}{(1-\alpha)(1-p)}
   \E{-\alpha^{1-p} X_T^p
        +p X_0^{p-1}X_T
       }
 =
 X_0^p f(\alpha),
 \end{eqnarray*}
where, with the notation $m_r$ introduced in the statement of the Proposition,
 \begin{eqnarray*}
 f(\alpha)
 :=
 \frac{1}{1-\alpha}
                  +\frac{-\alpha^{1-p}m_p+pm_1}
                           {(1-\alpha)(1-p)},
 \qquad \alpha\in[0,1].
 \end{eqnarray*}
Next we prove the existence of a unique $\hat\alpha\in(0,1]$ such that $f(\hat\alpha)=\min_{\alpha \in [0,1]}f(\alpha)$.
To do this, we first compute that 
 \begin{eqnarray*}
 f'(\alpha)
 =
 \frac{h(\alpha)}{(1-p)(1-\alpha)^2},
 &\mbox{where}&
 h(\alpha)
 :=
 1-p+pm_1-(1-p+p\alpha)m_p\alpha^{-p}.
 \end{eqnarray*}
By direct calculation, we see that $h$ is continuous and strictly increasing on $(0,1]$, with $h(0+)=-\infty$ and $h(1)=1-p+pm_1-m_p$. Moreover, it follows from the Jensen inequality and the submartingale property of $X$ that $m_p\le m_1^p$ and $m_1\ge 1$. This implies that $h(1)\ge 0$ since $1-p+px-x^p\geq 0$ for $x\geq 1$. In consequence, there exists $\hat\alpha\in(0,1]$ such that $h\le 0$ on $(0,\hat\alpha]$ and $h\ge 0$ on $[\hat\alpha,1]$. This implies that $f$ is decreasing on $[0,\hat\alpha]$ and increasing on $[\hat\alpha,1]$, proving that $\hat\alpha$ is the unique minimizer of $f$.

Now the first inequality \eqref{eq:Doob_type_inequality_a} follows by plugging the equation $h(\hat{\alpha}) = 0$ into the expression for $f$. The bound in \eqref{eq:Doob_type_inequality_b} is then obtained by adding strictly positive terms. It also corresponds to taking $\alpha=0$ in the expression for $f$. This completes the proof of the claim in {\rm (i)}. 

As for {\rm (ii)}, the claim regarding a martingale attaining equality in \eqref{eq:Doob_type_inequality_a} follows precisely as in the proof of Proposition \ref{prop:doob_Lp}.
Let $\alpha \in (0,1)$ and recall that $\tau_\alpha = \inf\{t: B_t\leq \alpha \bar B_t\}$ for a standard Brownian motion $B$ with $B_0=X_0>0$. Then, similarly to the proof of Proposition \ref{prop:doob_Lp}, we compute directly 
\begin{equation}\label{eq:tail_Xalpha}
 \PP(\bar{B}_{\tau_\alpha}\geq y)= \PP(B_{\tau_\alpha}\geq \alpha y) = \left(\frac{X_0}{y}\right)^{\frac{1}{1-\alpha}},\quad y\geq X_0.
\end{equation}
Computing and simplifying we obtain $\E{ \bar{B}^p_{\tau_{\alpha}} } = \frac{1}{1-p+p\alpha}X_0^p$, and hence $\E{ B^p_{\tau_{\alpha}} } = \frac{\alpha^p}{1-p+p\alpha}X_0^p$, while $\E{ B_{\tau_{\alpha}} } = X_0$. It follows that $\hat \alpha =\alpha$ solves \eqref{eq:alpha_hat} and equality holds in \eqref{eq:Doob_type_inequality_a}. Taking $\alpha$ arbitrarily small shows \eqref{eq:Doob_psmall_epsilon} holds true.
\end{proof}

We close this section with a new type of Doob's $L\ln L$ type of $L^1$ inequality obtained taking $p\nearrow 1$ in Proposition \ref{prop:Doobsmallp}.
Since $\hat\alpha(p)$ defined in \eqref{eq:alpha_hat} belongs to $[0,1]$ there is a converging subsequence. So without loss of generality, we may assume $\hat\alpha(p)\longrightarrow\hat\alpha(1)$ for some $\hat\alpha(1)\in[0,1]$. 
In order to compute $\hat\alpha(1)$, we re-write  \eqref{eq:alpha_hat} into  
 \begin{eqnarray}\label{h(p)-h(1)}
 \frac{g(p)-g(1)}{p-1}
 = m_p
 &\mbox{where}&
 g(p):=pm_p\hat\alpha(p)-(1-p+pm_1)\hat\alpha(p)^p.
 \end{eqnarray}
We see by a direct differentiation, invoking implicit functions theorem, that
 \begin{eqnarray*}
 g'(1)
 &=&
 \hat\alpha(1)\left(1+\E{\frac{X_T}{X_0}\ln \frac{X_T}{X_0}}\right)-\hat\alpha(1)\ln\hat\alpha(1)\E{\frac{X_T}{X_0}}.
 \end{eqnarray*}
Then, sending $p\to 1$ in \eqref{h(p)-h(1)}, we get the following equation for $\hat\alpha(1)$:
 \begin{eqnarray}\label{eq:alpha1}
 \hat\alpha(1)\left(1+\E{\frac{X_T}{X_0}\ln \frac{X_T}{X_0}}\right) 
 &=&
 \E{\frac{X_T}{X_0}} (1+ \hat\alpha(1)\ln\hat\alpha(1)).
 \end{eqnarray}
 We note that this equation does not solve explicitly for $\hat \alpha(1)$.  
Sending $p\to 1$ in the inequality of Proposition 3.4 we obtain the following improvement to the classical Doob's $L\log L$ inequality presented in Proposition \ref{prop:doob_L1} above.
\begin{Corollary}[Improved Doob's $L^1$ Inequality]
\label{cor:NewL1}
Let $X$ be a non-negative \cadlag submartingale, $X_0>0$. Then: 
 \begin{eqnarray}\label{eq:NewL1}
 \E{\bar{X}_T}
 &\le &
 \frac{\mathbb{E}[X_T]}{\hat\alpha} = \frac{\E{X_T\ln X_T}+ X_0 - \E{X_T}\ln X_0}{1+ \hat\alpha\ln\hat\alpha}
% &&\leq \frac{e}{e-1}\left(\E{X_T\ln X_T}+ X_0 - \E{X_T}\ln X_0\right),\nonumber
 \end{eqnarray} 
 where $\hat\alpha\in (0,1)$ is uniquely defined by \eqref{eq:NewL1}.
\end{Corollary}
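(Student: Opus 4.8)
\textbf{Proof plan for Corollary \ref{cor:NewL1}.}

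The plan is to obtain \eqref{eq:NewL1} as the limit $p \nearrow 1$ of the family of inequalities \eqref{eq:Doob_type_inequality_a} from Proposition \ref{prop:Doobsmallp}. First I would fix a non-negative \cadlag submartingale $X$ with $X_0 > 0$ and, for each $p \in (0,1)$, write $\hat\alpha(p) \in (0,1]$ for the unique solution of \eqref{eq:alpha_hat}, so that by Proposition \ref{prop:Doobsmallp}(i) we have $\E{\bar X_T^p} \le X_0^p m_p \hat\alpha(p)^{-p}$ where $m_p = X_0^{-p}\E{X_T^p}$. Since $\hat\alpha(p) \in [0,1]$ for all $p$, I would extract a subsequence $p_k \nearrow 1$ along which $\hat\alpha(p_k) \to \hat\alpha(1)$ for some $\hat\alpha(1) \in [0,1]$ (as already noted in the excerpt). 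The left-hand side converges: by monotone convergence, $\E{\bar X_T^{p_k}} \to \E{\bar X_T}$ (using $\bar X_T \ge X_0 > 0$, so $\bar X_T^{p_k}$ is monotone in a controlled way, or more carefully split according to $\{\bar X_T \ge 1\}$ and $\{\bar X_T < 1\}$ and apply monotone/dominated convergence on each piece). Likewise $m_{p_k} \to m_1 = X_0^{-1}\E{X_T}$ by dominated convergence if $\E{X_T} < \infty$; the case $\E{X_T} = \infty$ must be handled separately but then the right-hand side of \eqref{eq:NewL1} is $+\infty$ and there is nothing to prove. Similarly if $\E{X_T \ln X_T} = \infty$ the inequality is trivial.

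Next I would identify $\hat\alpha(1)$ via the reformulation \eqref{h(p)-h(1)}: dividing \eqref{eq:alpha_hat} through by $1-p+p\hat\alpha(p)$ and rearranging gives the identity $g(p) - g(1) = (p-1) m_p$ with $g(p) = p m_p \hat\alpha(p) - (1-p+pm_1)\hat\alpha(p)^p$, so that $\frac{g(p)-g(1)}{p-1} = m_p$. Letting $p \to 1$, the left-hand side tends to $g'(1)$ provided $g$ is differentiable at $1$, which follows from the implicit function theorem applied to \eqref{eq:alpha_hat} (the relevant partial derivative is $h'$, which the proof of Proposition \ref{prop:Doobsmallp} showed is strictly positive on $(0,1]$ — one needs $\hat\alpha(1) > 0$ here, see below) together with $\frac{d}{dp}m_p\big|_{p=1} = X_0^{-1}\E{X_T \ln X_T}$ by differentiating under the expectation. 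Computing $g'(1)$ as in the excerpt yields $g'(1) = \hat\alpha(1)\big(1 + \E{\tfrac{X_T}{X_0}\ln\tfrac{X_T}{X_0}}\big) - \hat\alpha(1)\ln\hat\alpha(1)\,\E{\tfrac{X_T}{X_0}}$, and equating this to $\lim_{p\to1} m_p = m_1 = \E{X_T/X_0}$ gives \eqref{eq:alpha1}, i.e. $\hat\alpha(1)\big(1 + \E{\tfrac{X_T}{X_0}\ln\tfrac{X_T}{X_0}}\big) = \E{\tfrac{X_T}{X_0}}\big(1 + \hat\alpha(1)\ln\hat\alpha(1)\big)$. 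Passing to the limit in $\E{\bar X_T^{p}} \le X_0^p m_p \hat\alpha(p)^{-p}$ then gives $\E{\bar X_T} \le X_0 m_1 \hat\alpha(1)^{-1} = \E{X_T}/\hat\alpha(1)$, and substituting the defining relation \eqref{eq:alpha1} for $\hat\alpha(1)$ in the denominator produces the second expression in \eqref{eq:NewL1}, namely $\frac{\E{X_T\ln X_T} + X_0 - \E{X_T}\ln X_0}{1 + \hat\alpha\ln\hat\alpha}$ after multiplying numerator and denominator by $X_0$. Finally, since the limit does not depend on the subsequence (it is pinned down by the equation \eqref{eq:alpha1}, whose solution in $(0,1)$ is unique — see below), the full limit exists and the inequality holds.

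The main obstacle I anticipate is twofold. First, one must rule out the degenerate limit $\hat\alpha(1) = 0$: if $\hat\alpha(p_k) \to 0$ the bound $X_0 m_{p_k}\hat\alpha(p_k)^{-p_k}$ could blow up and the implicit-function argument for $g'(1)$ breaks down. This should be excluded using $h(1) = 1-p+pm_1-m_p \ge 0$ and the monotonicity of $h$: as $p \to 1$, $h$ converges (locally uniformly on compacts of $(0,1]$) to a limiting increasing function whose root is bounded away from $0$ unless $X_T \equiv X_0$, and in the trivial case $X_T = X_0$ a.s. both sides of \eqref{eq:NewL1} can be checked directly. Second, one must establish uniqueness of $\hat\alpha \in (0,1)$ solving \eqref{eq:alpha1}: rewriting \eqref{eq:alpha1} as $\hat\alpha(1 + A) = M(1 + \hat\alpha\ln\hat\alpha)$ with $A := \E{\tfrac{X_T}{X_0}\ln\tfrac{X_T}{X_0}} \ge 0$ and $M := \E{X_T/X_0} \ge 1$, i.e. $\hat\alpha(1 + A - M\ln\hat\alpha) = M$, the function $\hat\alpha \mapsto \hat\alpha(1 + A - M\ln\hat\alpha)$ has derivative $1 + A - M - M\ln\hat\alpha$, which is positive for $\hat\alpha$ small, so this map is eventually increasing from $0$ to a value $\ge 1 + A - M \cdot 0$; a monotonicity/intermediate-value argument (mirroring the analysis of $h$ in the proof of Proposition \ref{prop:Doobsmallp}) pins down a unique root in $(0,1)$, justifying the phrase ``uniquely defined by \eqref{eq:NewL1}'' and the well-posedness of the whole limiting procedure. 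The remaining steps — the interchanges of limit and expectation, and the differentiation under the integral sign for $m_p$ — are routine given the finiteness assumptions extracted at the outset.
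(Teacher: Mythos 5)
Your proposal follows the paper's proof essentially line for line: take $p\nearrow 1$ in Proposition~\ref{prop:Doobsmallp}, extract a convergent subsequence of $\hat\alpha(p)$, rewrite \eqref{eq:alpha_hat} as the difference quotient \eqref{h(p)-h(1)}, compute $g'(1)$ by implicit differentiation, obtain \eqref{eq:alpha1}, and substitute back into $\E{X_T}/\hat\alpha(1)$. Your additional remarks on integrability reductions, passage to the limit, and uniqueness of $\hat\alpha(1)$ are welcome, as the paper is terse on these points.

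One supplementary claim is, however, incorrect. You propose ruling out $\hat\alpha(1)=0$ by arguing that as $p\to 1$ the function $h(\cdot;p)$ converges locally uniformly on compacts of $(0,1]$ to an increasing function with a root bounded away from $0$. But substituting $p=1$ into
$h(\alpha)=1-p+pm_1-(1-p+p\alpha)\,m_p\,\alpha^{-p}$
gives $m_1-\alpha\,m_1\,\alpha^{-1}\equiv 0$; the first-order condition degenerates as $p\to 1$, which is precisely why both you and the paper must abandon direct tracking of the roots of $h$ and pass instead to the difference quotient $(g(p)-g(1))/(p-1)=m_p$, which remains a nondegenerate identity in the limit. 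To exclude $\hat\alpha(1)=0$ one should instead observe that if $\hat\alpha(p)\to 0$ then $g(1)=0$ and $g(p)\to 0$, which is incompatible with $(g(p)-g(1))/(p-1)=m_p\to m_1\ge 1$; equivalently, substituting $\hat\alpha(1)=0$ into \eqref{eq:alpha1} forces $m_1=0$, a contradiction. There is also a small slip: $\frac{d}{dp}m_p\big|_{p=1}=\E{\tfrac{X_T}{X_0}\ln\tfrac{X_T}{X_0}}=X_0^{-1}\E{X_T\ln X_T}-X_0^{-1}\ln X_0\,\E{X_T}$, not $X_0^{-1}\E{X_T\ln X_T}$; you carry the correct quantity into $g'(1)$ afterwards, so this does not propagate, but the intermediate formula should be fixed.
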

Note that the equality in \eqref{eq:NewL1} is a rewriting of \eqref{eq:alpha1}.
To the best of our knowledge the above inequality in \eqref{eq:NewL1} is new. It bounds $\E{\bar{X}_T}$ in terms of a function of $\E{X_T}$ and $\E{X_T\ln X_T}$, similarly to the classical inequality in \eqref{eq:doob_L1}. However here the function depends on $\hat \alpha$ which is only given implicitly and not explicitly. In exchange, the bound refines and improves the classical inequality in \eqref{eq:doob_L1}. This follows from the fact that 
$$1+\alpha \ln \alpha \geq \frac{e-1}{e},\quad \alpha \in (0,1).$$
We note also that for $X_t:= B_{\frac{t}{T-t}\land \tau_\alpha}$, $\alpha\in (0,1)$, we have $\hat\alpha=\alpha$ and equality is attained in \eqref{eq:NewL1}. This follows from the proof above or is verified directly using \eqref{eq:tail_Xalpha}.
The corresponding classical upper bound in \eqref{eq:doob_L1} is strictly greater expect for $\alpha=1/e$ when the two bounds coincide.

\subsection{No Further Improvements with Intermediate Moments}

Next, we prove that beyond the improvement stated in Proposition \ref{prop:doob_Lp} no sharper bounds can be obtained from the inequalities of Theorem \ref{thm:main_result_1}.
\begin{Proposition}[No Improvement of Doob's $L^p$-Inequality from Theorem \ref{thm:main_result_1}]
\label{prop:No Improvement of Doob}
Let $p>1$ and $\tbzeta \in \tilde{\mathscr{Z}}$ be such that $\tilde{\zeta}_j(m) \neq \zeta_{\frac{p-1}{p}}(m) = \frac{p-1}{p}m$ for some $m>X_0$ and some $j$. 
Then, there exists a martingale $X$ such that
\begin{align}
\left( \frac{p}{p-1} \right)^p \E{ X_T^p } - \frac{p}{p-1}X_0^p 
< 
\mathrm{UB}\left( X, \mathrm{pow}^p, \tbzeta \right).
\end{align}
\end{Proposition}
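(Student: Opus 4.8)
The plan is to compare $\mathrm{UB}(X,\mathrm{pow}^p,\tbzeta)$ not with the right-hand side of \eqref{eq:doob_Lp} directly, but with $\mathrm{UB}\big(X,\mathrm{pow}^p,\zeta_{\frac{p-1}{p}}\big)$, after observing that for a large class of martingales the two coincide. Indeed, inspect the chain \eqref{eq:estimations_Doob_Lp} in the proof of Proposition~\ref{prop:doob_Lp} with $\alpha=\frac{p-1}{p}$: its first line is an identity, and the second and third inequalities are equalities precisely when $X_T\ge\frac{p-1}{p}X_0$ a.s.\ (so that $\frac{X_T}{\alpha}\vee X_0=\frac{X_T}{\alpha}$) and $X$ is a genuine martingale. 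Hence, for every martingale $X$ with $\E{X_T^p}<\infty$ and $X_T\ge\frac{p-1}{p}X_0$ a.s.,
$$\Big(\tfrac{p}{p-1}\Big)^p\E{X_T^p}-\tfrac{p}{p-1}X_0^p \;=\; \mathrm{UB}\big(X,\mathrm{pow}^p,\zeta_{\frac{p-1}{p}}\big),$$
so it suffices to exhibit one such $X$ with $\mathrm{UB}(X,\mathrm{pow}^p,\tbzeta)>\mathrm{UB}\big(X,\mathrm{pow}^p,\zeta_{\frac{p-1}{p}}\big)$. (If $\mathrm{UB}(X,\mathrm{pow}^p,\tbzeta)=+\infty$ for our candidate $X$ there is nothing to prove, so we may assume it finite.)

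For the witness, fix an index $j$ with $\tilde\zeta_j\neq\zeta_{\frac{p-1}{p}}$ and a parameter $\alpha\in(\frac{p-1}{p},1)$ (to be pinned down later). Let $B$ be a Brownian motion with $B_0=X_0$, put $\tau_\alpha=\inf\{u:B_u\le\alpha\bar B_u\}$, and let $X$ be the martingale which is constant $\equiv X_0$ on $[0,t_{j-1}]$, on $[t_{j-1},t_j]$ runs a time-change of $B$ stopped at $\tau_\alpha$ (so $X_{t_j}=B_{\tau_\alpha}$), and is constant thereafter; thus $X_{t_i}=X_0$ for $i<j$ and $X_{t_i}=B_{\tau_\alpha}$ for $i\ge j$. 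As in the proof of Theorem~\ref{thm:main_result_1}(ii), $X$ is a UI martingale, $X_T=\alpha\bar B_{\tau_\alpha}\ge\alpha X_0>\frac{p-1}{p}X_0$ a.s., and $\E{X_T^p}<\infty$ since $\Prob{\bar B_{\tau_\alpha}\ge y}=(y/X_0)^{-1/(1-\alpha)}$ is $p$-integrable for $\alpha>\frac{p-1}{p}$. Because $X$ takes only the two values $X_0$ and $B_{\tau_\alpha}$ along $t_0,\dots,t_n$, the telescoping sum $\sum_i\E{\lambda^{\bzeta,m}_i(X_{t_i})}$ collapses, and a short computation gives, for every $m$ large enough that $\tilde\zeta_1(m)>X_0$,
$$\mathrm{UB}(X,\indic{[m,\infty)},\tbzeta)=G_\alpha\big(m,\tilde\zeta_j(m)\big),\qquad \mathrm{UB}\big(X,\indic{[m,\infty)},\zeta_{\frac{p-1}{p}}\big)=G_\alpha\big(m,\tfrac{p-1}{p}m\big),$$
where $G_\alpha(m,\zeta):=\E{(B_{\tau_\alpha}-\zeta)^+}/(m-\zeta)$; on the remaining bounded range of $m$ one carries along the extra $(X_0-\,\cdot\,)^+$ terms, which (using $\tilde\zeta_j=\tilde\zeta_1$ near $X_0$, as $\tbzeta\in\tilde{\mathscr Z}$) affect $G_\alpha(m,\cdot)$ only through its second argument. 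Integrating against $p\,m^{p-1}\dd m$ reduces the problem to making
$$\int_{X_0}^{\infty} p\,m^{p-1}\Big[\,G_\alpha\big(m,\tilde\zeta_j(m)\big)-G_\alpha\big(m,\tfrac{p-1}{p}m\big)\,\Big]\,\dd m \;>\;0$$
for a good choice of $\alpha$. (When already $\tilde\zeta_1\neq\zeta_{\frac{p-1}{p}}$ one simply takes $j=1$, i.e.\ the martingale which embeds on $[0,t_1]$ and is constant afterwards, and there are no correction terms.)

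The sign of the integrand is governed by the elementary fact that, for fixed $m$, the map $\zeta\mapsto G_\alpha(m,\zeta)$ is strictly decreasing on $(-\infty,\alpha m)$ and strictly increasing on $(\alpha m,m)$, with minimum $\Prob{\bar B_{\tau_\alpha}\ge m}$ attained at $\zeta=\alpha m$ — indeed $\partial_\zeta G_\alpha(m,\zeta)$ has the sign of $\E{B_{\tau_\alpha}\mid B_{\tau_\alpha}>\zeta}-m=\zeta/\alpha-m$. Since $\tbzeta$ is continuous and $\tilde\zeta_j\neq\zeta_{\frac{p-1}{p}}$, there is a non-degenerate interval $I\subseteq(X_0,\infty)$ on which $\tilde\zeta_j(m)-\frac{p-1}{p}m$ keeps a fixed, non-zero sign. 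Choosing $\alpha$ so that, on $I$, the two points $\tilde\zeta_j(m)$ and $\frac{p-1}{p}m$ lie on the same monotone branch of $G_\alpha(m,\cdot)$ with $\tilde\zeta_j(m)$ the one farther from the minimiser $\alpha m$ forces the integrand to be strictly positive on $I$; one then checks that the possible negative part of the integrand off $I$ is outweighed by this contribution, which yields the strict inequality and, via the first paragraph, the statement of the Proposition.

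The crux — and the step I expect to be the real work — is this last domination argument. Away from $I$ the value $\tilde\zeta_j(m)$ may sit on the disadvantageous branch of $G_\alpha(m,\cdot)$, so the integrand is not pointwise non-negative, and one must tune $\alpha$: sending $\alpha\downarrow\frac{p-1}{p}$ handles the case $\tilde\zeta_j\le\frac{p-1}{p}\mathrm{id}$ throughout, since then $\tilde\zeta_j(m)$ always lies on the decreasing branch; and when $\tilde\zeta_j$ crosses the line $m\mapsto\frac{p-1}{p}m$ one also truncates the embedding, stopping $B$ when $\bar B$ first reaches a large level $N$, so that $\bar X_T$ and $X_T$ become bounded, the $m$-integral runs only over $(X_0,N)$, and the positive contribution of $I$ can be isolated. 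Here it is useful that the Doob gap $\mathrm{UB}\big(X^\alpha,\mathrm{pow}^p,\zeta_{\frac{p-1}{p}}\big)-\E{(\bar X^\alpha_T)^p}$ tends to $0$ as $\alpha\downarrow\frac{p-1}{p}$, so that only an arbitrarily small positive surplus over Doob's bound is needed.
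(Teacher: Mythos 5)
Your overall strategy matches the paper's: build the two-valued witness $X^\alpha$ with $X^\alpha_{t_i}=X_0$ for $i<j$ and $X^\alpha_{t_i}=B_{\tau_\alpha}$ for $i\ge j$, show the telescoping sum in $\mathrm{UB}$ collapses, use the monotonicity of $\zeta\mapsto G_\alpha(m,\zeta)$ (equivalently, that the unconstrained optimal stopping-based level for $B_{\tau_\alpha}$ is $\alpha m$), and finally exploit the asymptotic optimality of $X^\alpha$ as $\alpha\downarrow\tfrac{p-1}{p}$. Your computation $\E{B_{\tau_\alpha}\mid B_{\tau_\alpha}>\zeta}=\zeta/\alpha$ and the ensuing shape of $G_\alpha(m,\cdot)$ are all correct, and the observation in your first paragraph that equality holds throughout \eqref{eq:estimations_Doob_Lp} for the $X^\alpha$ is a nice clean reformulation of why \eqref{eq:sharpness_doob_Lp} does the work.

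Where you depart from the paper, and where a genuine difficulty arises, is your choice to compare $G_\alpha\big(m,\tilde\zeta_j(m)\big)$ directly with $G_\alpha\big(m,\tfrac{p-1}{p}m\big)$. Since $\alpha>\tfrac{p-1}{p}$, the reference level $\tfrac{p-1}{p}m$ sits on the decreasing branch of $G_\alpha(m,\cdot)$ strictly above the minimum, so whenever $\tilde\zeta_j(m)\in\big(\tfrac{p-1}{p}m,\,2\alpha m-\tfrac{p-1}{p}m\big)$ the integrand is \emph{negative}. The resulting domination/truncation argument you sketch (tuning $\alpha$ and stopping $B$ at level $N$) is not filled in and, as written, is much more delicate than what is needed. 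The paper sidesteps this entirely by comparing against $\zeta_\alpha$ rather than $\zeta_{\frac{p-1}{p}}$: since $X^\alpha$ is an iterated \AY embedding for $\zeta_\alpha$, Proposition \ref{prop:Pathwise Equality} gives $\mathrm{UB}\big(X^\alpha,\indic{[m,\infty)},\zeta_\alpha\big)=\Prob{\bar{X}^\alpha_{T}\ge m}$, and Theorem \ref{thm:main_result_1} gives $\mathrm{UB}\big(X^\alpha,\indic{[m,\infty)},\tbzeta\big)\ge\Prob{\bar{X}^\alpha_{T}\ge m}$ for \emph{every} $m$. The integrand is therefore pointwise non\-negative, Proposition \ref{prop:positive_hedging_error} gives a strictly positive lower bound $\delta>0$ on a fixed interval $(m_1,m_2)$ uniformly in $\alpha\in(\alpha^\star,\alpha^\star+\epsilon)$, and one concludes immediately: $\mathrm{UB}\big(X^\alpha,\mathrm{pow}^p,\tbzeta\big)\ge\E{(\bar{X}^\alpha_T)^p}+\delta\int_{m_1}^{m_2}p\,m^{p-1}\dd m$, while $\big(\tfrac{p}{p-1}\big)^p\E{(X^\alpha_T)^p}-\tfrac{p}{p-1}X_0^p-\E{(\bar{X}^\alpha_T)^p}\to 0$. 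You already state this asymptotic gap as the final ingredient; if you route the lower bound through $\zeta_\alpha$ instead of $\zeta_{\frac{p-1}{p}}$, the sign issue you flag disappears, the truncation at level $N$ becomes unnecessary, and the proof closes. In its present form the middle comparison is a gap.
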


\begin{proof}
Let $\alpha > \frac{p-1}{p} =: \alpha^{\star}$ and take $X^{\alpha}$ satisfying
\begin{align*}
0=X^{\alpha}_{t_1} = \dots = X^{\alpha}_{t_{j-1}}, \qquad \qquad  B_{\tau_{\alpha}} = X^{\alpha}_{t_j} = \dots = X^{\alpha}_{t_n}
\end{align*}
where $B$ is a Brownian motion started at $X_0$ and $\tau_{\alpha} = \inf\{ u>0 : B_u \leq \zeta_{\alpha}( \bar{B}_u ) \}$. 
It follows easily that for this process $X^{\alpha}$,
\begin{align*}
\mathrm{UB} \left( X^{\alpha}, \mathrm{pow}^p, \tilde{\zeta}_j \right)  
\leq \mathrm{UB} \left( X^{\alpha}, \mathrm{pow}^p, \tbzeta \right) 
\end{align*}
and hence it is enough to prove the claim for $n=1$ and $\tbzeta = \tilde{\zeta}_j$.

For all $\alpha \in (\alpha^{\star},\alpha^{\star}+\epsilon), \  \epsilon>0$, Proposition \ref{prop:positive_hedging_error} yields existence of a non-empty, open interval $\mathcal{I}_{\alpha}$ such that
\begin{align}
\forall m \in \mathcal{I}_{\alpha}: \quad 
\mathrm{UB} \left( X^{\alpha},\indic{[m,\infty)},\zeta_{\alpha} \right) < \mathrm{UB} \left( X^{\alpha},\indic{[m,\infty)}, \tilde{\zeta}_j \right).
\end{align}
In fact, taking $\epsilon>0$ small enough, $\mathcal{I}_{\alpha}$ can be chosen such that
\begin{align}
\bigcap_{\alpha \in (\alpha^{\star}, \alpha^{\star}+ \epsilon)} \mathcal{I}_{\alpha} \quad \supseteq \quad (m_1,m_2), \qquad \qquad X_0 < m_1 < m_2.
\end{align}
We can further (recalling the arguments in Case A and Case B in the proof of Proposition \ref{prop:positive_hedging_error}) assume that for all $\alpha \in (\alpha^{\star}, \alpha^{\star} + \epsilon)$:
\begin{align}
\forall m \in (m_1,m_2): \quad \mathrm{UB}\left( X^{\alpha},\indic{[m,\infty)},\tilde{\zeta}_j \right) - \mathrm{UB}\left( X^{\alpha},\indic{[m,\infty)}, \zeta_{\alpha} \right) \geq \delta > 0.
\end{align}
The claim follows by letting $\alpha \downarrow \alpha^{\star}$ and using the asymptotic optimality of $\left( X^{\alpha} \right)_{\alpha}$, see \eqref{eq:sharpness_doob_Lp}.
\end{proof}

In addition to the result of Proposition \ref{prop:No Improvement of Doob} we prove that there is no \enquote{intermediate moment refinement of Doob's $L^p$-inequalities} in the sense formalized in the next Proposition.
Intuitively, this could be explained by the fact that the $p^{\mathrm{th}}$ moment of a continuous martingale is continuously non-decreasing and hence does not add relevant information about the $p^{\mathrm{th}}$ moment of the maximum. 
Only the final $p^{\mathrm{th}}$ moment matters in this context. 
\begin{Proposition}[No Intermediate Moment Refinement of Doob's $L^p$-Inequality]
\label{prop:no_n_moment_refinement}
If $a_1,\dots,a_n$ are such that for every continuous submartingale $X$, $X_0 = 0$, we have
\begin{align}
\E{ \bar{X}_T^p }  \leq  \sum_{i=1}^{n} a_i \E{ |X_{t_i}|^{p} }
\label{eq:n_moment_doob}  
\end{align}
or
\begin{align}
\E{ \bar{X}_T^p }  \leq  \sum_{i=1}^{n} a_i \E{ |X_{t_i} - X_{t_{i-1}}|^{p} },
\label{eq:n_moment_doob_diff}  
\end{align}
then
\begin{align}
\left( \frac{p}{p-1} \right)^p   \E{ |X_{T}|^{p} }   \leq  \sum_{i=1}^{n} a_i \E{ |X_{t_i}|^{p} }
\label{eq:Doob no improvement} 
\end{align}
or
\begin{align}
\left( \frac{p}{p-1} \right)^p   \E{ |X_{T}|^{p} }   \leq  \sum_{i=1}^{n} a_i \E{ |X_{t_i} - X_{t_{i-1}}|^{p} }, 
\label{eq:Doob no improvement diff} 
\end{align}
respectively.
\end{Proposition}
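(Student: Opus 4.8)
The plan is to show that the hypothesis forces the coefficients $a_i$ to be so large that \eqref{eq:Doob no improvement} (resp. \eqref{eq:Doob no improvement diff}) becomes automatic. I will describe the argument for \eqref{eq:n_moment_doob}$\Rightarrow$\eqref{eq:Doob no improvement}; the difference version uses the same scheme with the obvious modifications (and is commented on at the end).

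First I would extract lower bounds on the partial sums $A_j:=a_j+\dots+a_n$. Fix $j\in\{1,\dots,n\}$ and $\alpha\in(\frac{p-1}{p},1)$, and build a continuous uniformly integrable martingale $X^{j,\alpha}$ with $X^{j,\alpha}_0=0$ which (i) stays at $0$ on $[0,t_{j-1}]$, (ii) on $(t_{j-1},T]$ performs the Doob-extremal embedding $\tau_\alpha=\inf\{u:B_u\le\alpha\bar B_u\}$ but started from $0$, and (iii) is frozen once this is completed, so that $X^{j,\alpha}_{t_j}=\dots=X^{j,\alpha}_{t_n}=X^{j,\alpha}_T$. The one technical point is starting at $0$ rather than at a positive value as in Propositions \ref{prop:doob_Lp}(ii) and \ref{prop:Doobsmallp}(ii): I would first let the process split from $0$ to $\{-\delta,\epsilon_0\}$, with the $\epsilon_0$-branch (of probability $q$) then carrying out $B_{\tau_\alpha}$ and the $-\delta$-branch frozen, and let $q\downarrow 0$; the computations of those proofs give $\E{(\bar X^{j,\alpha}_T)^p}\sim\frac{q\epsilon_0^p}{1-p+p\alpha}$ and $\E{|X^{j,\alpha}_T|^p}\sim\frac{q\alpha^p\epsilon_0^p}{1-p+p\alpha}$, so the ratio tends to $\alpha^{-p}$. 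Feeding $X^{j,\alpha}$ into \eqref{eq:n_moment_doob} — where only the terms $i\ge j$ survive and all equal $\E{|X^{j,\alpha}_T|^p}$ — and letting $q\downarrow 0$ and then $\alpha\downarrow\frac{p-1}{p}$ yields $A_j\ge(\frac{p}{p-1})^p$ for every $j$.

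Next, given a continuous submartingale $X$ with $X_0=0$, write $s_i:=\E{|X_{t_i}|^p}$, $s_0:=0$, and sum by parts: $\sum_{i=1}^n a_i s_i=\sum_{i=1}^n A_i(s_i-s_{i-1})$. When $X$ is a martingale (or a non-negative submartingale) $|X|^p$ is a submartingale for $p\ge1$, so $s_0\le s_1\le\dots\le s_n$, and then, as $A_i\ge(\frac{p}{p-1})^p>0$, we get $\sum a_i s_i\ge(\frac{p}{p-1})^p\sum(s_i-s_{i-1})=(\frac{p}{p-1})^p s_n=(\frac{p}{p-1})^p\E{|X_T|^p}$, which is \eqref{eq:Doob no improvement}.

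The main obstacle is the general-submartingale case, where $i\mapsto\E{|X_{t_i}|^p}$ need not be monotone (a continuous submartingale may rise through zero from the negative side, lowering $\E{|X_{t_i}|^p}$), so $A_i\ge(\frac{p}{p-1})^p$ alone does not close the summation-by-parts estimate. Here one must use the hypothesis more fully: testing \eqref{eq:n_moment_doob} against continuous submartingales with a non-monotone $p$-th moment profile (a process at $\pm c$ at time $t_{i-1}$ whose negative branch then relaxes upward) forces additional, upper-type relations among the $a_i$, and these are exactly what absorbs the negative increments $s_i-s_{i-1}$; equivalently, for fixed $X$ one builds a companion continuous submartingale matching $(\E{|X_{t_i}|^p})_i$ but with near-Doob-extremal maximum $\E{\bar{}_T^p}\to(\frac{p}{p-1})^p\E{|X_T|^p}$, realizing the intermediate $p$-th moments through tiny negative excursions and submartingale relaxation so as not to inflate the maximum, and then applies \eqref{eq:n_moment_doob} to it. For \eqref{eq:n_moment_doob_diff}$\Rightarrow$\eqref{eq:Doob no improvement diff}, single-interval test martingales give $a_j\ge(\frac{p}{p-1})^p$ for each $j$, but since $\E{|X_T|^p}\le\sum_i\E{|X_{t_i}-X_{t_{i-1}}|^p}$ can fail for $p>2$ one needs, in addition, the coefficient constraints coming from test martingales whose increments are spread over several intervals, combined with $X_T=\sum_i(X_{t_i}-X_{t_{i-1}})$ and a Minkowski-type estimate; this bookkeeping, together with the $0$-start reduction of the extremal martingales, is where the work lies.
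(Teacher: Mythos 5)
Your argument for \eqref{eq:n_moment_doob} $\Rightarrow$ \eqref{eq:Doob no improvement} in the martingale case is correct and takes a genuinely different route from the paper's. You first extract the tail-sum constraints $A_j := a_j + \dots + a_n \geq \bigl(\frac{p}{p-1}\bigr)^p$ by testing with martingales that are frozen outside $[t_{j-1},t_j]$ (your $0$-start branching device is a legitimate way to push the near-extremal Doob martingale down to $X_0=0$), and then close the estimate for an arbitrary $X$ by Abel summation, using that $i\mapsto \E{|X_{t_i}|^p}$ is non-decreasing. The paper does something more compressed but less modular: it scales one near-extremal Doob martingale $Y^\epsilon$ so that $\E{|Y^\epsilon_T|^p}=\E{|X_T|^p}$, picks a non-decreasing time-change $u_1\leq\dots\leq u_n=T$ with $\E{|Y^\epsilon_{u_i}|^p}=\E{|X_{t_i}|^p}$, and plugs the time-changed $Y^\epsilon$ directly into \eqref{eq:n_moment_doob}. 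Note that choosing the $u_i$ in increasing order requires exactly the same monotonicity of $i\mapsto\E{|X_{t_i}|^p}$ that your Abel step needs; so the concern you raise about sign-changing submartingales is not peculiar to your method -- it is an implicit assumption in the paper's proof as well, and is unproblematic for continuous martingales and non-negative submartingales, which is where the statement has content.

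For the increment version \eqref{eq:n_moment_doob_diff} $\Rightarrow$ \eqref{eq:Doob no improvement diff}, the paper does exactly what you do -- test with martingales constant off $[t_{i-1},t_i]$ to obtain $a_i\geq\bigl(\frac{p}{p-1}\bigr)^p$ -- and then simply asserts the conclusion ``follows.'' Your observation that the missing intermediate step $\E{|X_T|^p}\leq\sum_i\E{|X_{t_i}-X_{t_{i-1}}|^p}$ can fail (iid Gaussian increments give $2^{p/2}$ against $2$ for $p>2$) is a real objection, and the paper supplies no bridge across it. Your suggestion -- extract further constraints on the $a_i$ from test martingales whose increments span several of the intervals and combine with $X_T=\sum_i(X_{t_i}-X_{t_{i-1}})$ -- is the right kind of repair, but as written it is a plan rather than a proof, so this half of the proposition is not established by your proposal.
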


\begin{proof}
From \citet[Example 4.1]{peskir1998} or our Proposition \ref{prop:doob_Lp} we know that Doob's $L^p$-inequality given in \eqref{eq:doob_Lp} is enforced by a sequence of continuous martingales $(Y^{\epsilon})$ in the sense of \eqref{eq:sharpness_doob_Lp}.
We will write $\left( \frac{p}{p-1} \right)^p   \E{ |Y^{\epsilon}_{T}|^{p} } \simeq \E{ \max_{t\leq T}|Y^{\epsilon}_t|^p }$.

Firstly consider the case of \eqref{eq:n_moment_doob} and \eqref{eq:Doob no improvement}.

By scalability of the asymptotically optimal martingales $(Y^{\epsilon})$ we can assume
\begin{align*}
\E{ |X_{t_n}|^{p} } = \E{ |Y_{t_n}^{\epsilon}|^p }.
\end{align*}
In addition we can find times $u_1 \leq \dots \leq u_{n-1}$ such that
\begin{align*}
\E{ |X_{t_i}|^{p} } = \E{ |Y_{u_i}^{\epsilon}|^p }.
\end{align*}
Therefore, writing $u_n = t_n = T$ and using asymptotic optimality of $(Y^{\epsilon})$, 
\begin{alignat*}{3}
\left( \frac{p}{p-1} \right)^p   \E{ |X_{t_n}|^{p} } &= &&\left( \frac{p}{p-1} \right)^p   \E{ |Y^{\epsilon}_{u_n}|^{p} } \simeq \E{ \max_{t\leq T}|Y^{\epsilon}_t|^p }  \\
&\stackrel{\mathclap{\eqref{eq:n_moment_doob}}}{\leq} \quad &&\sum_{i=1}^{n} a_i \E{ |Y^{\epsilon}_{u_i}|^{p} } = \sum_{i=1}^{n} a_i \E{ |X_{t_i}|^{p} }.
\end{alignat*}
Equation \eqref{eq:Doob no improvement} follows.

Secondly consider the case of \eqref{eq:n_moment_doob_diff} and \eqref{eq:Doob no improvement diff}.
Taking a martingale which is constant until time $t_{i-1}$ and after $t_{i}$ and using the the fact that Doob's $L^p$ inequality is sharp yields 
\begin{align*}
\left(\frac{p}{p-1} \right)^p \leq a_i \qquad \text{for all $i=1,\dots,n$.} 
\end{align*}
Equation \eqref{eq:Doob no improvement diff} follows.
\end{proof}

\begin{Remark}
Analogous statements hold for Doob's $L^1$ inequality. 
This can be argued in the same way by using that Doob's $L^1$ inequality is attained (cf. e.g. \citet[Example 4.2]{peskir1998} or our Proposition \ref{prop:doob_L1}) and observing that the function $x \mapsto x \log(x)$ is convex.
\end{Remark}

\bibliographystyle{hapalike}
\bibliography{literature_papers3}

\end{document}